\documentclass{article}
\usepackage{fancyhdr}
\usepackage{amscd}
\usepackage[dvipdfmx]{graphicx}
\usepackage{amsmath}
\usepackage{amssymb}
\usepackage{amsthm}
\usepackage{mathrsfs}
\newtheorem{dfn}{Definition}[section]
\newtheorem{thm}[dfn]{Theorem}
\newtheorem{prop}[dfn]{Proposition}

\numberwithin{equation}{section}

\begin{document}

\title{
Dynamics and structure of pull-in and touchdown behavior in parallel-plate electrostatic MEMS actuators via geometric approaches
}

\author{Yu Ichida
\thanks{Department of Mathematics, School of Science and Technology, Meiji University, 1-1-1 Higashimita, Tama-ku, Kawasaki 214-8571, Japan, {\tt ichidayu@meiji.ac.jp}}, 
Kyoichi Kakuno
\thanks{Graduate School of Science and Engineering, Ritsumeikan University, Kusatsu 525-8577, Japan
{\tt rm0199rk@ed.ritsumei.ac.jp}},
Daisuke Yamane
\thanks{Graduate School of Science and Engineering, Ritsumeikan University, Kusatsu 525-8577, Japan
{\tt dyamane@fc.ritsumei.ac.jp}}
$^{,}$
\footnote{Ritsumeikan Advanced Research Academy, Kyoto, 604-8520, Japan}
}
%\date{}
\maketitle

\begin{abstract}
This paper focuses on parallel-plate electrostatic actuators, which are  fundamental structures found in Micro-Electro-Mechanical Systems (MEMS) for numerous modern devices.
It considers the equations used to model these actuators.
In particular, we investigate the behavior of the solutions to a MEMS model that is represented by a second-order ordinary differential equation, where the spring is characterized as soft, linear, or hard.
We present the mathematical structure underlying the pull-in and touchdown phenomena that characterize the behavior of systems incorporating soft and hard spring, based on the behavior of linear springs.
The pull-in phenomenon corresponds to the degeneration of the solutions to the model equation. 
The touchdown phenomenon corresponds to the finite-time singularity of the solutions.
These global dynamics and structures are systematically revealed through geometric approaches based on Poincar\'e-type compactification, the center manifold theorem, and blow-up technique. 
These methods successfully induce and resolve the dynamics at infinity.\end{abstract} 

{\bf Keywords:} 
MEMS, 
parallel plate electrostatic actuators,
pull-in phenomenon,
touchdown phenomenon,
Poincar\'e-type compactification
center manifold theorem,
blow-up technique

\section{Introduction}
\label{sec:IKYsh-int}
Small, high-performance devices and micromachines are indispensable to today's digital society. 
Micro-electro-mechanical systems (MEMS), which integrate electrical and mechanical elements, are built into many modern devices.
For an overview of MEMS and their model equations, see \cite{Sen} and the references therein.
MEMS are used in sensors and switches, as well as in fields such as medicine and biochemistry.

Investigating the effects of changes in material properties is essential for MEMS development. 
However, conducting these types of investigations requires significant time, cost, and resources.
Therefore, formulating hypotheses and logical arguments using mathematical methods is valuable.

Our group studies the impact of different material properties and plate geometries, beginning with parallel-plate electrostatic actuators. 
These actuators are the simplest form of MEMS microstructures.
In this paper, we first study the influence of spring characteristics from an engineering perspective.
Specifically, we compare hard, soft, and linear springs to clarify how device behavior differs.

\begin{figure}[t]
\begin{center}
\includegraphics[scale=0.65]{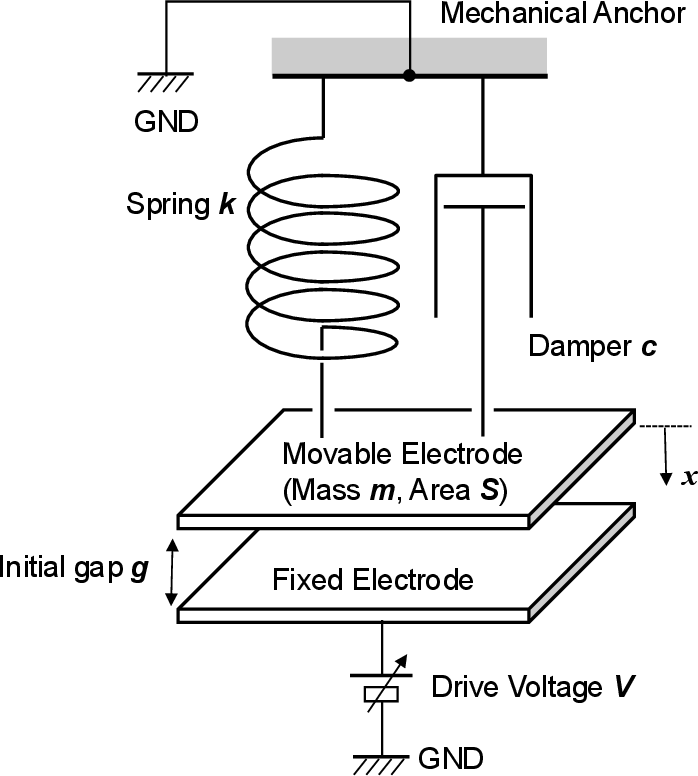}
\caption{Schematic picture of a parallel-plate electrostatic actuator.}
\label{fig:IKYsh-int1}
\end{center}
\end{figure}

A parallel-plate electrostatic actuator describes the motion of two parallel plates, as shown in the schematic picture in Figure \ref{fig:IKYsh-int1}.
This model is also known as the spring-mass model.
Let $x=x(t)$ represent the distance between the plates, where $0\le x(t) \le g$, and let $g$ represent the initial gap between them.
Let us derive the model equation for a parallel-plate electrostatic actuator.
From Newton's laws of motion, we can formulate the equation of motion as 
\begin{equation}
m\dfrac{d^{2}x}{dt^{2}}=\sum\, {\rm{forces}}.
\label{eq:IKYsh-int1}
\end{equation}
Here, $m$ represents the mass.
Let b represent the spring characteristics. 
Based on the spring force, velocity-proportional damping, and electrostatic attraction, we can derive the following:
\begin{equation}
\sum\, {\rm{forces}}=-kx -bx^{3}-c\dfrac{dx}{dt} + \dfrac{1}{2}\dfrac{\varepsilon_{0}S}{(g-x)^{2}}V^{2}.
\label{eq:IKYsh-int2}
\end{equation}
For $b \in \mathbb{R}$, a spring is considered a hard spring when $b > 0$, a linear spring when $b = 0$, and a soft spring when $b < 0$.
Furthermore, the spring characteristics are modeled based on the Duffing equation term $bx^3$, which is based on empirical rules from the field of engineering (for instance, see \cite{Sen}).

$c$ is the velocity parameter, $k$ is the spring constant, $\varepsilon_{0}$ is the permittivity, $S$ is the plate area, and $g$ is the initial gap.
All of these are positive constants.
In this paper, we adopt a constant voltage $V(t) \equiv V > 0$, which is also commonly used in MEMS experiments for simplicity.

As a contribution to the field of engineering, understanding the behavior of solutions through mathematical analysis is expected to provide insights into the effects of spring characteristics and lead to improvements in MEMS functionality.
By combining the model equations \eqref{eq:IKYsh-int1} and \eqref{eq:IKYsh-int2}, we have
\begin{equation}
m\dfrac{d^{2}x}{dt^{2}}+c\dfrac{dx}{dt}+kx+bx^{3}=\dfrac{1}{2}\dfrac{\varepsilon_{0}S}{(g-x)^{2}}V^{2}.
\label{eq:IKYsh-int3}
\end{equation}

We impose the initial conditions $x(0)=0$ and $\dot{x}(0)=0$.
Hereinafter, we define $\dot{x}=dx/dt$ and $\ddot{x}=d^{2}x/dt^{2}$.
The mathematical model discussed in this paper is an initial value problem with the governing equation given by \eqref{eq:IKYsh-int3}.

In parallel-plate electrostatic actuators, the pull-in, touchdown, and the 1/3 rule are well-known phenomena in the MEMS field when $b=0$.
These behaviors arise from the changing relationship between electrostatic attraction and spring restoring force when the magnitude of the applied voltage is varied.
In this paper, the voltage at which electrostatic attraction and spring restoring force are in a state of equality is referred to as the pull-in voltage.
For an initial gap $g$, it is known that pull-in occurs at $x = g/3$, a phenomenon known as the 1/3 rule.
For instance, see \cite{Nath, ZYPM} and the references therein.

Increasing the voltage even slightly beyond the pull-in voltage causes the electrostatic force to exceed the spring's restoring force, resulting in instantaneous contact between the two plates.
We refer to this as the touchdown phenomenon in this paper.
While this phenomenon is valuable for switch applications, it must be avoided in many MEMS devices to ensure safety.

To refine the 1/3 rule and clarify the contact velocity, researchers are experimenting with MEMS development by altering the shape of parallel plates or modifying material properties. 
From a mathematical perspective, a theoretical understanding of these three characteristics is expected to reduce implementation costs, improve efficiency, enhance performance, and encourage technological innovation.

From a mathematical viewpoint, the first mathematical model of a parallel-plate electrostatic actuator is generally attributed to Nathanson et al. (\cite{Nath}).
Since then, a great deal of research has been conducted, including studies focused on mathematical topics, such as proving the existence of periodic solutions to differential equations.
See Ai-Pelesko (\cite{AiPel}), Guti\'errez-Torres (\cite{GT}), Kong-Yu (\cite{KoYu}), Cheng-Zhao-Yuan (\cite{CZY26}), and references therein.
In these studies, the applied voltage $V(t)$ is treated as a time-dependent periodic voltage, and the properties of periodic solution behavior (existence, stability, and multiplicity) as well as the bifurcation structure are investigated.

Many mathematical studies focus on rigorously proving that the behavior of two parallel plates becomes periodic when the applied voltage is defined as a function, such as $V = V(t)$.
However, few studies compare these results with experimental data.
Furthermore, the equation contains nonlinear terms involving negative powers, making its analysis challenging.

Furthermore, regarding the touchdown phenomenon, the authors believe that while much research has been conducted using partial differential equations as model equations, there has been little research that integrates MEMS experiments from the perspectives of pull-in and the 1/3 rule.

This paper aims to rigorously elucidate three characteristic changes in behavior caused by spring properties that have not yet been clearly demonstrated from an engineering perspective.
Specifically, we intend to shed new light on MEMS development by clarifying the following two points:
\begin{enumerate}
\item[(i)] 
classification of the behavior of \eqref{eq:IKYsh-int3} under the assumptions that $b = 0$ and the voltage is constant, as well as the mathematical structure of the three characteristic behaviors (pull-in, touchdown, and the 1/3 rule).
\item[(ii)] 
the influence of $b$ on the mathematical structure underlying the three characteristic behavior resulting from the consideration of spring characteristics ($b>0$ and $b<0$).
\end{enumerate}
In the experimental approach, we fabricate a demonstration unit equivalent to a parallel-plate electrostatic actuator model and extract three characteristic changes in behavior by conducting a statistically sufficient number of experiments using linear and nonlinear springs.
However, since it is difficult to directly and accurately observe the movement speed and displacement of the moving parts of micro- or nano-scale parallel plates, it is generally difficult to demonstrate the effects of spring characteristics using a prototype.

We will discuss the strategy for deriving these results.
This will lay the groundwork for a rigorous mathematical investigation into the behavior of solutions to the differential equation \eqref{eq:IKYsh-int3}.
By formulating this as an initial value problem, we can impose the initial condition
\begin{equation}
x(0)=0,\quad \dot{x}(0)=0.
\label{eq:IKYsh-int4}
\end{equation}
Hereinafter, let $dx/dt = \dot{x}$ and $d^{2}x/dt^{2} = \ddot{x}$.
The term $1/(g-x)^{2}$ on the right-hand side of this equation means that the solution $x(t)$ cannot be expressed in terms of known functions. 
Therefore, we will proceed with a qualitative analysis of the solution. 
However, the presence of the negative power term complicates this analysis.
Therefore, we introduce a new variable denoted by the symbol $\phi(t)$ as follows:
\begin{equation}
\phi(t)=g-x(t).
\label{eq:IKYsh-int5}
\end{equation}
Thus, the equation that the function $\phi$ must satisfy and the corresponding initial conditions are
\begin{equation}
-m\ddot{\phi}-c\dot{\phi}+k(g-\phi)+b(g-\phi)^{3}=\dfrac{1}{2}\varepsilon_{0}SV^{2}\phi^{-2}
\label{eq:IKYsh-int6}
\end{equation}
and
\begin{equation}
\phi(0)=g,\quad \dot{\phi}(0)=0.
\label{eq:IKYsh-int7}
\end{equation}
Furthermore, by setting $\dot{\phi} = \psi$, $\eqref{eq:IKYsh-int6}$ is equivalent to the following:
\begin{equation}
\begin{cases}
\dot{\phi}=\psi,\\
\dot{\psi}=-\dfrac{c}{m}\psi -\dfrac{b}{m}\phi^{3}+\dfrac{3gb}{m}\phi^{2} - \dfrac{3g^{2}b+k}{m}\phi + \dfrac{kg+bg^{3}}{m} - \dfrac{\varepsilon_{0}SV^{2}}{2m}\phi^{-2}.
\end{cases}
\label{eq:IKYsh-int8}
\end{equation}
Note that nondimensionalization is usually used to reduce the number of parameters, such as $c$, $m$, $k$, $b$, $g$, $\varepsilon_{0}$, $S$ and $V$, in mathematical treatments. 
Unlike standard mathematical treatments that rely on nondimensionalization to reduce parameter spaces, this paper intentionally preserves all physical parameters. 
This approach ensures that the bifurcation boundaries and invariant manifolds unraveled by our geometric analysis remain transparently linked to real-world design spaces, directly bridging abstract singularity theory with experimental prototype specifications.
By avoiding nondimensionalization, the explicit relationship between the mathematical structure of singularities and the physical specifications of the device prototypes remains completely transparent, thereby facilitating direct comparison with experimental data.

A touchdown is defined as the function $x(t)$ reaching $g$ within a finite time (see Definition \ref{def:IKYsh-mr1} below for a detailed definition), and this corresponds to the behavior of $\psi\to -\infty$ in \eqref{eq:IKYsh-int8}.
In general, this behavior corresponds to the finite-time singularity of the solution, and its analysis is not straightforward.
To analyze the behavior of $\psi\to -\infty$, this paper introduces the Poincar\'e-type compactification, a type of compactification of phase space.
This is a method for deriving dynamical systems on $\Phi=\mathbb{R}^{2} \, \cup\, \{\|(\phi, \psi)\|=+\infty\}$ as defined in \eqref{eq:IKYsh-int8}. 
It represents a fusion of dynamical systems theory and geometry.
For details on Poincar\'e-type compactifications, see \cite{FAL, QTW, UPKPP, Matsue1, Matsue2}, or the brief review in Subsection \ref{sub:IKYsh-d1}.
Since the leading terms differ for $b=0$ and $b \neq 0$, the treatment of infinity changes accordingly.
Therefore, in this paper, we will analyze the dynamical systems at infinity separately for $b=0$ and $b \neq 0$.

By considering the dynamical system near the finite equilibrium for \eqref{eq:IKYsh-int8}, we can obtain the mathematical structure underlying the pull-in and the 1/3 rule.
The moment when the existence or stability of a local (finite) equilibrium changes corresponds to a pull-in, which mathematically corresponds to a bifurcation point.
Furthermore, by examining the dynamical system at infinity reveals the mathematical structure underlying the touchdown.
A touchdown is a finite-time singularity of a solution to a differential equation. 

Since analyzing these behavior amounts to analyzing degeneracy and infinity, it is no easy work to treat this central problem in the dynamical systems theory.
To overcome these difficulties, we employ geometric approaches based on a Poincar\'e-type compactification, the center manifold theorem, and blow-up technique. 
Geometric treatments of finite-time singularities for autonomous ordinary differential equations (ODEs) have been framework established in recent literature (e.g., \cite{Matsue1, Matsue2}) by using the Poincar\'e-type compactification.
A comprehensive geometric classification and understanding the behavior that fully captures the complex interaction between the electrostatic forces and spring nonlinearities ($b \in \mathbb{R}$) remains unsolved.
The previous studies have analyzed touchdown instabilities within specific partial differential equations (PDEs) or localized ODE frameworks, a comprehensive geometric classification that captures the interplay between spring nonlinearities ($b \in \mathbb{R}$) and boundaries at infinity has remained unexplored. 

This research arises from concerns about the practical development of MEMS and their associated mathematical contributions.
However, since the analysis of model equations exhibits mathematically intriguing properties, such as complex interaction, degeneracy and finite-time singularity.
Thus, our contributions extend beyond MEMS to the field of mathematics.
The core novelty of this work lies in fully characterizing the topology of the phase space and completely classifying behavior.
To the best of the authors' knowledge, this is the first study to rigorously clarify the exact mathematical relationship between stable operations, the pull-in bifurcation point, and touchdown singularities while preserving the full physical parameters of the electrostatic MEMS actuator.

To the authors' knowledge, this is the first study to provide a rigorous, unified mathematical framework that classifies how the soft ($b < 0$), linear ($b = 0$), and hard ($b > 0$) spring characteristics alter the global phase space structure, stable equilibria, and the nature of touchdown singularities.

The organization of this paper is as follows.
The next section presents the main results.
Section \ref{sec:IKYsh-f} discusses the finite equilibrium of \eqref{eq:IKYsh-int8}.
Sections \ref{sec:IKYsh-d} and \ref{sec:IKYsh-dib} provide classifications of the dynamical system \eqref{eq:IKYsh-int8} at infinity for the cases $b=0$ and $b \neq 0$, respectively.
This dynamical system, which includes infinity, is also known as a Poincar\'e-type disk and is obtained via a Poincar\'e-type compactification.
In Section \ref{sec:IKYsh-pro}, we provide a proof of the main results.
Finally, Section \ref{sec:IKYsh-c} provides insights and considerations for improving the structure and functionality of MEMS.

\section{Main results}
\label{sec:IKYsh-mr}
Before presenting the main results, we will define the solution to the model equation that describes the touchdown.
Mathematically, this corresponds to a quench, which is a type of finite-time singularity.

\begin{dfn}
\label{def:IKYsh-mr1}
If there exists a time $|t_{+}| < +\infty$ such that
\[
\lim_{t\to t_{+}-0}x(t) =g, \quad 
\lim_{t\to t_{+}-0} \dot{x}(t)= +\infty,
\]
then the solution to the equation \eqref{eq:IKYsh-int3} is said to touchdown (or quench) in finite time, and $x = x(t)$ is called a touchdown solution.
\end{dfn}

\begin{thm}
\label{thm:IKYsh-mr1}
Let $c$, $m$, $k$, $g$, $\varepsilon_{0}$, $S$, and $V$ be positive constants, and let $b = 0$.
Let $K$ be defined as follows:
\[
K= (2m)^{-1}\varepsilon_{0}SV^{2} - (27m)^{-1}4kg^{3} 
\]
Then, depending on the value of $K$, the solutions to the equation \eqref{eq:IKYsh-int3} under the initial conditions $x(0) = 0$ and $\dot{x}(0) = 0$ can be classified into the following three types of behavior:
\begin{enumerate}
\item[(i)] 
If $K > 0$, the solution of \eqref{eq:IKYsh-int3} is a touchdown solution (Definition \ref{def:IKYsh-mr1}).
In addition, the asymptotic behavior of $x(t)$ as $t\to t_{+}-0$ is 
\begin{equation}
x(t) \sim g - A(t_{+}-t)^{\frac{2}{3}} \quad {\rm{as}} \quad t\to t_{+}-0.
\label{eq:IKYsh-mr1}
\end{equation} 
\item[(ii)] 
If $K=0$, then the solution $x(t)$ to this initial value problem exists globally and satisfies the following:
\[
x(t)=\dfrac{1}{3}g +O(t^{-1}) \quad {\rm{as}} \quad t\to +\infty,
\]
that is, 
\[
\lim_{t\to \infty}x(t)=\dfrac{1}{3}g, \quad 
\lim_{t\to \infty} \dot{x}(t)=0.
\]
\item[(iii)]
If $K < 0$, then the solution $x(t)$ to this initial value problem is global.
It satisfies $x(t) \to M$ ($0 < M < g/3$) and $\dot{x}(t) \to 0$ as $t\to +\infty$.
\end{enumerate}
\end{thm}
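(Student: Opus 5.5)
The approach is to use the energy structure of \eqref{eq:IKYsh-int3} with $b=0$, together with an elementary description of the finite equilibria, and to use the Poincar\'e-type compactification only for the asymptotics near touchdown. Set $\alpha=\varepsilon_0SV^2/2$ and $U(x)=\tfrac{k}{2}x^2-\alpha/(g-x)$ on $[0,g)$. Then
\[
E(t)=\tfrac{m}{2}\dot x^2+U(x), \qquad \dot E=-c\dot x^2\le 0 .
\]
Equilibria satisfy $U'(x)=kx-\alpha(g-x)^{-2}=0$, i.e.\ $h(x):=kx(g-x)^2=\alpha$. On $[0,g]$ the function $h$ increases on $[0,g/3]$, decreases on $[g/3,g]$, and has maximum $h(g/3)=4kg^3/27$. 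Hence $mK=\alpha-\max h$, and the three cases are:
\begin{itemize}
\item $K>0$: no equilibrium.
\item $K=0$: a single degenerate equilibrium at $x=g/3$ (saddle-node).
\item $K<0$: two equilibria $x_1<g/3<x_2$. Linearizing \eqref{eq:IKYsh-int8} shows that $x_1$ is a sink and $x_2$ is a saddle.
\end{itemize}
In the variables of \eqref{eq:IKYsh-int8} this is the local analysis of finite equilibria, and the bifurcation at $K=0$ is the pull-in together with the $1/3$ rule.

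\emph{Case (i), $K>0$.} Here $-U'(x)\ge mK>0$ on $[0,g)$. Writing $v=\dot x$, we get $m\dot v\ge mK-cv$ with $v(0)=0$. Comparison gives $v(t)\ge (mK/c)(1-e^{-ct/m})>0$ for $t>0$. So $x$ is strictly increasing with speed bounded below after $t=1$, and it must reach $g$ at some finite $t_+$. The solution cannot stop before that, since the vector field is smooth on $\{x<g\}$ and there is no equilibrium to converge to.

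The energy identity shows that $\tfrac m2\dot x^2\ge \alpha/(g-x)-C$ along the orbit, because $\int_0^{t_+} c\dot x^2\,dt$ is bounded by this same inequality on a bounded time interval. Hence $\dot x\to+\infty$, and the solution is a touchdown solution in the sense of Definition \ref{def:IKYsh-mr1}.

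For the rate, I will pass to the Poincar\'e-type compactification of \eqref{eq:IKYsh-int8} with weights adapted to the dominant balance $m\ddot\phi\approx-\alpha\phi^{-2}$. The balance $\psi^2\sim\phi^{-1}$ suggests the weights $(\phi,\psi)\sim(s^{2},s^{-1})$ near $\phi=0$; a blow-up of the corner $\phi=0$, $\psi=-\infty$ will be needed. The target is a hyperbolic (or center-manifold-reducible) equilibrium on the boundary that attracts our orbit. Reading off the leading order of the reduced flow, with time desingularized and then converted back, should give
\[
\phi\sim A(t_+-t)^{2/3}, \qquad A=\left(\tfrac{9\alpha}{2m}\right)^{1/3},
\]
which is exactly $A$ solving $m\cdot\tfrac{2}{9}A^3=\alpha$ in the balance equation.

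\emph{Cases (ii) and (iii).} First I show the orbit stays in $\{\dot x\ge0,\ x<x_*\}$ long enough to be trapped, where $x_*=g/3$ in case (ii) and $x_*=x_1$ in case (iii). Initially $\ddot x(0)=\alpha/(mg^2)>0$. I then plan to argue via the phase portrait of \eqref{eq:IKYsh-int8}. In case (iii), the orbit from $(\phi,\psi)=(g,0)$ should lie in the basin of the sink $x_1$, i.e.\ on the correct side of the stable manifold of the saddle $x_2$. I will test this by comparing energies: it suffices that $U(0)<U(x_2)$, because then $E(t)\le U(0)$ forbids crossing $x=x_2$. LaSalle's principle then gives $x\to x_1=:M\in(0,g/3)$ and $\dot x\to0$.

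In case (ii) the analogous barrier is $U(0)\le U(g/3)$, which would keep the orbit in $x\le g/3$. The orbit then converges to the degenerate equilibrium. The rate comes from the center manifold there: the reduced equation has the form $\dot u=-a u^2+O(u^3)$ with $u=x-g/3<0$ and $a>0$ computed from $U'''(g/3)\neq 0$, giving $u=O(t^{-1})$. I also need the orbit to approach along the center direction rather than the strong stable one; this follows because $u<0$ lies on the attracting side of the saddle-node.

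\emph{Main obstacle.} The hard step is the trapping/energy comparison in cases (ii)--(iii). Because of inertia, $U(0)<U(x_2)$ is not automatic: for small damping, dynamic pull-in can in principle occur before the static threshold $K=0$. I will first try to verify the inequality directly from the explicit formula for $U$ using $kx_2(g-x_2)^2=\alpha$. If it fails near $K=0$, I would instead use the damping $c$ together with the compactified phase portrait (the stable manifold of the saddle and the behavior at infinity) to locate the orbit from $(g,0)$. If even that does not close, an additional hypothesis on $c$ may be needed.
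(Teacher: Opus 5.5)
\textbf{Verdict: part (i) is fine, but parts (ii)--(iii) have a genuine gap that no argument can close, because without a hypothesis on the damping $c$ those two claims are false.}

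\textbf{The energy barriers fail.} Substituting $\alpha=kx_2(g-x_2)^2$ gives $U(x_2)-U(0)=kx_2^2\,(x_2/g-\tfrac12)$.
\begin{itemize}
\item In (iii), your barrier $U(0)<U(x_2)$ therefore holds iff $x_2>g/2$, i.e.\ $\alpha<kg^3/8$, i.e.\ $V<\sqrt{27/32}\,V^*$. In that range your energy-plus-LaSalle argument does prove (iii), for every $c>0$.
\item In (ii), the barrier $U(0)\le U(g/3)$ never holds. When $K=0$ one has $U'(x)=(kx(g-x)^2-\alpha)/(g-x)^2\le 0$ on $[0,g)$, so $U(g/3)<U(0)$.
\end{itemize}

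\textbf{The fallback cannot work either.} Take $kg^3/8<\alpha\le 4kg^3/27$. At $c=0$ we have $\tfrac m2\dot x^2=U(0)-U(x)>0$ on $(0,g)$, so the orbit crosses $x_2$ (resp.\ $g/3$) with positive speed. By continuous dependence on $c$, the same holds for all sufficiently small $c>0$. Beyond that point $-U'(x)>0$, so $\dot x$ cannot vanish, and the solution touches down even though $K\le 0$. This is the classical dynamic pull-in. So neither the damping nor the compactified phase portrait can rescue (ii)--(iii) as stated.

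\textbf{Where your saddle-node argument breaks.} You claim the orbit approaches the saddle-node along the centre direction ``because $u<0$ lies on the attracting side''. That is exactly the step that fails. Being at $u<0$ does not put the orbit in the parabolic sector; it must lie on the correct side of the strong stable manifold, and which side it lies on depends on $c$. Separately, the reduced equation has the wrong sign: it should be $\dot u=+\tfrac{9k}{4gc}u^2$. With $\dot u=-au^2$ and $a>0$, the side $u<0$ would be repelling.

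\textbf{Comparison with the paper.} The paper does not supply the missing ingredient either. For (ii)--(iii) it only asserts a connecting orbit from $(g,0)$ to $(\phi_2,0)$ (resp.\ $(2g/3,0)$), citing Poincar\'e--Bendixson and ``since $g$ is small''. It then reads off the $O(t^{-1})$ rate from the centre manifold. Your ``main obstacle'' diagnosis is therefore correct. The honest conclusion is that (ii)--(iii) need an extra assumption on $c$; alternatively, (iii) holds under the restriction $\alpha<kg^3/8$, where your argument works for every $c>0$.

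\textbf{Part (i).} Your argument is sound after two small repairs.
\begin{itemize}
\item The force bound should read $-U'(x)=(\alpha-kx(g-x)^2)/(g-x)^2\ge mK/g^2$, not $\ge mK$.
\item Finiteness of the dissipation is cleanest as $\int_0^{t_+}c\dot x^2\,dt=c\int_0^g\dot x\,dx\le c\int_0^g\sqrt{2\alpha/(m(g-x))}\,dx<\infty$.
\end{itemize}
The resulting identity $\tfrac m2\dot\phi^2=\alpha/\phi+O(1)$ integrates directly to $\phi\sim(9\alpha/(2m))^{1/3}(t_+-t)^{2/3}$. So you can skip the weighted compactification altogether. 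The paper instead obtains the exponent $2/3$ from the sink $E^*$ of the blow-up of the origin in $\overline{V}_2$.
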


In particular, when $K=0$, the pull-in voltage $V=V^{*}$ is
\begin{equation}
V^{*}=\sqrt{\dfrac{8}{27}\dfrac{k}{\varepsilon_{0}S}g^{3}}.
\label{eq:IKYsh-mr2}
\end{equation}
The case $K=0$ corresponds to the critical pull-in state physically, and mathematically represents the threshold of a saddle-node bifurcation, where stable and unstable finite equilibria coalesce.

The following result concerns the classification of solutions to the initial value problem when incorporating a hard spring with $b>0$ or a soft spring with $b<0$. 
By comparing this result with Theorem \ref{thm:IKYsh-mr1}, it is possible to discuss the effects of replacing a linear spring with a soft or hard spring.

\begin{thm}
\label{thm:IKYsh-mr2}
Let $0\neq b\in \mathbb{R}$, $c$, $m$, $k$, $g$, $\varepsilon_{0}$, $S$ and $V$ be positive constants. 
The equation
\begin{equation}
5b\phi^{3}-12bg\phi^{2}+3(k+3bg^{2})\phi-2(kg+bg^{3})=0
\label{eq:IKYsh-mr3}
\end{equation}
has exactly one real root $\phi = \varphi^{*}$ for $0 < \phi < g$.
As $\phi = \varphi^{*}$, let $F(\varphi^{*})$ be defined by 
\begin{equation}
F(\varphi^{*}) := b(\varphi^{*})^{5}-3bg(\varphi^{*})^{4}+(k+3bg^{2})(\varphi^{*})^{3}-(kg+bg^{3})(\varphi^{*})^{2}+2^{-1}\varepsilon_{0}SV^{2}.
\label{eq:IKYsh-mr4}
\end{equation}
Then, depending on the value of $F(\varphi^{*})$, the behavior can be classified into the following three cases:
\begin{enumerate}
\item[(i)]
If $F(\varphi^{*}) > 0$, then a touchdown occurs as stated in Theorem \ref{thm:IKYsh-mr1} (i).
In addition, the asymptotic behavior of $x(t)$ as $t \to t_{+} - 0$ is given by \eqref{eq:IKYsh-mr1} .
\item[(ii)] 
If $F(\varphi^{*}) = 0$, let $x^{*} = g - \varphi^{*}$, then the solution $x(t)$ to this initial value problem is a global solution.
In addition, the following holds:
\[
x(t)=x^{*} +O(t^{-1}) \quad {\rm{as}} \quad t\to +\infty,
\]
that is,
\[
\lim_{t\to \infty}x(t)=x^{*}, \quad 
\lim_{t\to \infty} \dot{x}(t)=0.
\]
Here, $0<x^{*}<g/3$ holds for $b<0$, $x^{*}=g/3$ holds for $b=0$ and $g/3 < x^{*} <17g/27$ holds for $b>0$.
The system reaches the saddle-node bifurcation threshold (pull-in state), where the two finite equilibria coalesce into a unique degenerate finite equilibrium.
\item[(iii)] 
If $F(\varphi^{*}) < 0$, then the solution $x(t)$ to this initial value problem is a global solution, and $x(t) \to M$ and $\dot{x}(t) \to 0$ as $t \to +\infty$.
Regarding the estimation of $M$, $0 < M < 10g/27$ holds for $b > 0$ and $0 < M < g/3$ holds for $b < 0$.
\end{enumerate}
\end{thm}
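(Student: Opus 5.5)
Multiplying the $\psi$-equation of \eqref{eq:IKYsh-int8} by $-m\phi^{2}$ turns the finite equilibria into the zeros of
\[
G(\phi)=b\phi^{5}-3bg\phi^{4}+(k+3bg^{2})\phi^{3}-(kg+bg^{3})\phi^{2}+\tfrac12\varepsilon_{0}SV^{2},
\]
so $F(\varphi^{*})=G(\varphi^{*})$. We have
\[
G'(\phi)=\phi\bigl(5b\phi^{3}-12bg\phi^{2}+3(k+3bg^{2})\phi-2(kg+bg^{3})\bigr),
\]
so \eqref{eq:IKYsh-mr3} is exactly the critical-point equation of $G$ on $(0,g)$. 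Write $h$ for the cubic in \eqref{eq:IKYsh-mr3}. We have $h(0)=-2g(k+bg^{2})$ and $h(g)=kg>0$.

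\textbf{Step 1 (unique root).} We must show that $h$ has exactly one root in $(0,g)$. We do not need $h(0)<0$ for every $b<0$: a hard-to-compress soft spring with $k+bg^{2}\le 0$ should be treated separately or excluded implicitly. In the generic case $k+bg^{2}>0$, we get existence from the sign change. For uniqueness we examine $h'(\phi)=15b\phi^{2}-24bg\phi+3k+9bg^{2}$.
\begin{itemize}
\item For $b>0$: $h'$ has discriminant proportional to $144b^{2}g^{2}-45b(k+3bg^{2})=9b(bg^{2}-5k)$. Either $h'>0$ everywhere, or we use the alternative identity $h(\phi)=\phi\,(\text{stuff})-2(g-\phi)\bigl(k+b(g-\phi)^{2}\bigr)$ restricted to $(0,g)$ to show monotonicity of $G'/\phi^{2}$.
\item More robustly, $h(\phi)/\phi^{3}$, or equivalently $\frac{d}{d\phi}\bigl[\phi^{-2}\cdot(\text{spring force})\bigr]$, is cleaner. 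The equilibrium condition reads $\phi^{2}\bigl(k(g-\phi)+b(g-\phi)^{3}\bigr)=\varepsilon_{0}SV^{2}/2$, so $G$ is a constant minus $P(\phi)=\phi^{2}f(g-\phi)$, where $f(x)=kx+bx^{3}$. Then
\[
P'(\phi)=2\phi f(g-\phi)-\phi^{2}f'(g-\phi).
\]
Uniqueness of the critical point of $P$ on $(0,g)$ reduces to monotonicity of $\phi f'(g-\phi)/f(g-\phi)$, which follows since $x f'(x)/f(x)=(k+3bx^{2})/(k+bx^{2})$ is monotone in $x$ for either sign of $b$.
\end{itemize}
Hence $P$ is unimodal on $(0,g)$ with maximum at $\varphi^{*}$. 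Consequently, equilibria exist iff $F(\varphi^{*})\le 0$: there are two equilibria (a saddle and a stable focus/node) when $F(\varphi^{*})<0$, one degenerate equilibrium when $F(\varphi^{*})=0$, and none when $F(\varphi^{*})>0$.

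The bounds on $x^{*}$ follow by evaluating $h$ at $\phi=2g/3$ (where $h=\frac{b\cdot 8g^{3}}{27}\cdot\text{const}$ has the sign of $b$ up to computation, and $h=0$ when $b=0$) and at $\phi=10g/27$. The bounds on $M$ follow because the stable equilibrium lies on the side $\phi>\varphi^{*}$, together with a comparison showing it stays left of the stated bound.

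\textbf{Step 2 (finite dynamics).} Use the energy $E=\frac m2\psi^{2}+U(\phi)$ with $U'=-G/\phi^{2}$ (up to sign), for which $\dot E=-c\psi^{2}$. Starting from $(g,0)$, the solution lies in the sublevel set.
\begin{itemize}
\item If $F(\varphi^{*})<0$: the initial point lies in the basin of the stable equilibrium. This is shown by checking that $U(g)$ is below the saddle level, the analogue of the $b=0$ argument in Theorem \ref{thm:IKYsh-mr1}(iii). LaSalle's principle then gives convergence to $M$.
\item If $F(\varphi^{*})=0$: the same energy argument shows the trajectory approaches the degenerate equilibrium. Center manifold reduction yields $\dot u\approx -\alpha u^{2}$, hence the $O(t^{-1})$ rate.
\item If $F(\varphi^{*})>0$: there are no equilibria and $\dot\psi<0$ bounded away from zero on $\phi\in(0,g]$, so $\phi$ reaches $0^{+}$.
\end{itemize}

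\textbf{Step 3 (touchdown at infinity).} Near $\phi=0$, the $\phi^{-2}$ term dominates every spring term including $b\phi^{3}$, because $\phi\to 0$ with $\psi\to-\infty$. The Poincar\'e-type compactification for $b\ne 0$ in Section \ref{sec:IKYsh-dib} gives the same hyperbolic-type equilibrium at infinity as in the case $b=0$. We reuse the blow-up analysis behind Theorem \ref{thm:IKYsh-mr1}(i): the dominant balance $m\ddot\phi\approx-\frac{\varepsilon_{0}SV^{2}}{2}\phi^{-2}$ gives $\phi\sim A(t_{+}-t)^{2/3}$ with $A=(9\varepsilon_{0}SV^{2}/(4m))^{1/3}$.

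\textbf{Main obstacle.} The hardest step is the global basin claim in Step 2 for the case $F(\varphi^{*})<0$, namely showing the orbit from $(g,0)$ does not escape past the saddle. I would try an energy comparison $U(g)<U(\text{saddle})$, using that the saddle lies at $\phi<\varphi^{*}$ and that $U$ is monotone between.
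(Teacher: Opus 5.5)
\textbf{The main gap is the basin step in Step 2, and it cannot be repaired: claims (ii) and (iii) fail for small damping.} Normalize the potential by $U'(\phi)=F(\phi)/\phi^{2}$, so that $\frac{d}{dt}\bigl(\frac m2\psi^{2}+U(\phi)\bigr)=-c\psi^{2}$.

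When $F(\varphi^{*})<0$, $U$ is \emph{not} monotone between $g$ and the saddle $\phi_{1}$. Moving left from $\phi=g$, it decreases to its minimum at $\phi_{2}$ and then rises to a local maximum at $\phi_{1}$. Trapping therefore needs $U(g)<U(\phi_{1})$, and $F(\varphi^{*})<0$ does not imply this. As $F(\varphi^{*})\uparrow 0$, both $\phi_{1},\phi_{2}\to\varphi^{*}$, while $U(g)-U(\varphi^{*})$ stays bounded away from $0$. At $F(\varphi^{*})=0$, $U$ is strictly increasing on $(0,g]$, so in case (ii) there is no energy barrier at all.

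For $b=0$, write $\alpha=\tfrac12\varepsilon_{0}SV^{2}$. Then $U(\phi)-U(g)=\tfrac k2(g-\phi)^{2}-\alpha(g-\phi)/(g\phi)$, so $U(g)<U(\phi_{1})$ holds iff $\alpha<\tfrac18kg^{3}$. By contrast, $K<0$ holds iff $\alpha<\tfrac4{27}kg^{3}$. The gap between these is the classical dynamic pull-in window $\sqrt{27/32}\,V^{*}<V<V^{*}$.

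Whenever $U(g)>U(\phi_{1})$, and in particular whenever $F(\varphi^{*})=0$, the orbit fails to converge for small $c$:
\begin{enumerate}
\item[(a)] The undamped orbit from $(g,0)$ satisfies $\frac m2\psi^{2}=U(g)-U(\phi)>0$ on $[\phi_{1},g)$. Hence it crosses $\phi_{1}$ (resp.\ $\varphi^{*}$) with $\psi<0$ in finite time.
\item[(b)] By continuous dependence on $c$, the same holds for every sufficiently small $c>0$.
\item[(c)] Afterwards the orbit cannot return to $\psi\ge0$, since $\dot\psi=-F/(m\phi^{2})<0$ on $\{\psi=0,\ 0<\phi<\phi_{1}\}$. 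So it touches down.
\end{enumerate}
Hence (iii) needs an extra hypothesis such as $U(g)<U(\phi_{1})$; under it, your sublevel-set plus LaSalle argument does work. Claim (ii) can never follow from an energy argument and holds at best for sufficiently strong damping. The paper's own argument here (Poincar\'e--Bendixson plus ``since $g$ is small'') does not close this gap either.

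\textbf{Further points.}
\begin{enumerate}
\item[(1)] The bound $0<M<10g/27$ for $b>0$ is false, so no ``comparison'' can give it. From $\phi_{2}>\phi^{*}>10g/27$ one only gets $M<g-\phi^{*}<17g/27$, and $M\to g-\phi^{*}$ as $F(\phi^{*})\uparrow0$. For $g=k=b=1$ one finds $\phi^{*}\approx0.613$, so $M$ can exceed $10/27$. The paper's proof of (iii) likewise only yields $17g/27$.
\item[(2)] In Step 1 you were right to isolate $k+bg^{2}\le0$, but for $k+bg^{2}<0$ the uniqueness claim is actually false. With $g=k=1$ and $b=-2$, $h(\phi)=-10\phi^{3}+24\phi^{2}-15\phi+2$ has $h(0)=2$, $h(1/2)=-3/4$ and $h(1)=1$, hence two roots in $(0,1)$. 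The paper's case $0<k<\beta g^{2}$ overlooks the root in $(0,\tilde\phi_{-})$. So $k+bg^{2}\ge0$ must be assumed.
\item[(3)] For $b>0$ your monotonicity argument is incomplete. With $x=g-\phi$, the quantity is $\frac{g-x}{x}\cdot\frac{k+3bx^{2}}{k+bx^{2}}$, a decreasing factor times an increasing one. You need an estimate such as $4bkx^{2}/\bigl((k+3bx^{2})(k+bx^{2})\bigr)\le 2/(2+\sqrt3)<1$, which makes its logarithmic derivative negative. Alternatively, use the paper's route: $h'$ has discriminant $36b(bg^{2}-5k)$ and $h(\phi_{+})>0$.
\item[(4)] For $F(\varphi^{*})>0$, $\dot\psi$ is not bounded away from $0$, because the damping term is positive when $\psi<0$. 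However, $F/\phi^{2}\ge\delta>0$ on $(0,g]$ gives $\psi\le-\delta/(2c)$ after a finite time, which suffices.
\end{enumerate}

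\textbf{Step 3 is fine.} It is more direct than the paper's blow-up in $\overline{V}_{2}$. The energy identity gives $\frac m2\dot\phi^{2}=\frac{\varepsilon_{0}SV^{2}}{2\phi}+O(1)$, hence the explicit, $b$-independent constant $A=(9\varepsilon_{0}SV^{2}/4m)^{1/3}$.
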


It is immediately clear that if we set $b=0$ in Theorem \ref{thm:IKYsh-mr2}, then \eqref{eq:IKYsh-mr3} becomes $3k\phi - 2kg = 0$, and its solution is 
\[
\phi=\varphi^{*}=\dfrac{2}{3}g,
\]
which satisfies $0 < \phi < g$.
Furthermore, since \eqref{eq:IKYsh-mr4} yields $F\left(2g/3\right)=K$, we see that Theorem \ref{thm:IKYsh-mr2} includes Theorem \ref{thm:IKYsh-mr1}.

\section{Dynamics near finite equilibria}
\label{sec:IKYsh-f}
Since $\phi=0$ is a singularity of \eqref{eq:IKYsh-int8}, we introduce the following the time-rescaling desingularization:
\begin{equation}
dt/ds=\phi^{2}.
\label{eq:IKYsh-f1}
\end{equation}
This time-rescaling desingularization \eqref{eq:IKYsh-f1} corresponds to multiplying the vector field by $\phi^{2}$.
The solution curves of the system are invariant under both the parameters $t$ and $s$.
See \cite{CK} and the references therein for details.

We then obtain the following system:
\begin{equation}
\begin{cases}
\phi'=\phi^{2}\psi,\\
\psi'=-\dfrac{c}{m}\phi^{2}\psi -\dfrac{b}{m}\phi^{5}+\dfrac{3gb}{m}\phi^{4} - \dfrac{3g^{2}b+k}{m}\phi^{3} + \dfrac{kg+bg^{3}}{m}\phi^{2} - \dfrac{\varepsilon_{0}SV^{2}}{2m}
\end{cases}
\label{eq:IKYsh-f2}
\end{equation}
with the initial conditions $\phi(0)=g$ and $\psi(0)=0$.
Here, we define $\phi'=d\phi/ds$ and $\psi'=d\psi/ds$.
Corresponding to $0 \leq x(t) \leq g$, it is sufficient to consider the range $0 \leq \phi \leq g$.

The equations satisfied by the (finite) equilibria of \eqref{eq:IKYsh-f2} are given by
\[
\begin{cases}
\phi^{2}\psi=0, \\
-\dfrac{c}{m}\phi^{2}\psi -\dfrac{b}{m}\phi^{5}+\dfrac{3gb}{m}\phi^{4} - \dfrac{3g^{2}b+k}{m}\phi^{3} + \dfrac{kg+bg^{3}}{m}\phi^{2} - \dfrac{\varepsilon_{0}SV^{2}}{2m}=0.
\end{cases}
\]
Although the first equation implies that either $\phi=0$ or $\psi=0$, setting $\phi=0$ is not suitable since it does not satisfy the second equation.
Thus, $\psi=0$ must hold.
We then define the function $F=F(\phi)$ as follows:
\begin{equation}
F(\phi)=b\phi^{5}-3bg\phi^{4}+(k+3bg^{2})\phi^{3}-(kg+bg^{3})\phi^{2}+\dfrac{1}{2}\varepsilon_{0}SV^{2}.
\label{eq:IKYsh-f3}
\end{equation}
In other words, the problem of finding the finite equilibria of \eqref{eq:IKYsh-f2}  is equivalent to the problem of finding the zero points of $F(\phi)$.
Therefore, we will solve the former problem by examining the graph of $F(\phi)$.
Although the conclusion remains the same, the intermediate steps differ.
Since the situations vary depending on whether $b=0$, $b>0$, or $b<0$, we will consider each case separately for the reader's convenience.

\subsection{Existence and stability of the finite equilibria for $b=0$}
\label{sub:IKYsh-f0}
The function $F = F(\phi)$, defined by \eqref{eq:IKYsh-f3}, can be expressed as follows when $b = 0$:
\[
F(\phi) = k\phi^{3}-kg\phi^{2}+\dfrac{1}{2}\varepsilon_{0}SV^{2}.
\]
By differentiating $F(\phi)$ with respect to $\phi$, we can immediately see that it has extrema at $\phi = 0$ and $\phi = 2g/3$.
Since $F(0) = F(g) > 0$, the value of $F(2g/3)$ determines whether $F(\phi)$ has zeros, as well as their number and locations.
We then set 
\[
K=F(2g/3)=(2m)^{-1}\varepsilon_{0}SV^{2}-(27m)^{-1}4kg^{3}.
\]
Depending on the value of $K$, the equilibria can be classified as follows:
\begin{enumerate}
\item[(i)] 
When $K = F(2g/3) > 0$, There is no root $\phi\in(0, g)$ that satisfies $F(\phi) = 0$.
Therefore,the system \eqref{eq:IKYsh-f2} has no equilibria.
\item[(ii)] 
When $K = F(2g/3) = 0$, the equilibrium of \eqref{eq:IKYsh-f2} for $b=0$ is $E_0: (\phi, \psi) = (2g/3, 0)$.
This equilibrium corresponds to the balance between the electrostatic attraction, which follows the 1/3 rule at the pull-in voltage, and the restoring force of the spring at the initial gap of $1/3$.
Solving for $V$ at $K=0$ yields the pull-in voltage \eqref{eq:IKYsh-mr2}.
\item[(iii)] 
When $K=F\left(2g/3\right)<0$, the system \eqref{eq:IKYsh-f2} for $b=0$ has exactly two equilibria $E_{1}: (\phi,\psi)=(\phi_{1}, 0)$ and $E_{2}: (\phi_{2}, 0)$ with $0<\phi_{1}<2g/3<\phi_{2}<g$.
\end{enumerate}

Next, we will discuss the local stability of these equilibria.
The Jacobian matrix $J_{1}$ of the vector field \eqref{eq:IKYsh-f2} for $b=0$ and $K<0$ at $E_{1}$ is 
\[
J_{1}=\left(\begin{array}{cc}
0 & \phi_{1}^{2} \\
-m^{-1}3k\phi_{1}^{2}+m^{-1}2kg\phi_{1} & -m^{-1}c\phi_{1}^{2}
\end{array}\right).
\]
Hence, we can conclude that the equilibrium $E_{1}: (\phi, \psi)=(\phi_{1}, 0)$ is a saddle according to the standard dynamical systems theory.

Similarly, the Jacobian matrix $J_{2}$ of the vector field \eqref{eq:IKYsh-f2} at $E_{2}$ is 
\[
J_{2}=\left(\begin{array}{cc}
0 & \phi_{2}^{2} \\
-m^{-1}3k\phi_{2}^{2}+m^{-1}2kg\phi_{2} & -m^{-1}c\phi_{2}^{2}
\end{array}\right).
\]
A simple calculation shows that equilibrium $E_2$ is asymptotically stable.
Whether it is a node or a spiral depends on the sign of the discriminant of the characteristic polynomial.

The rest of this section is devoted to analyzing the dynamical system near the equilibrium $E_{0}: (\phi, \psi) = (2g/3, 0)$ when $K=0$.
The Jacobian matrix $J_{0}$ of the vector field \eqref{eq:IKYsh-f2} for $b=0$ at $E_{0}$ is 
\[
J_{0}=\left(\begin{array}{cc}
0 & 4g^{2}/9 \\
0 & -4cg^{2}/9m
\end{array}\right).
\]
The eigenvalues of $J_{0}$ are $0$ and $-4cg^{2}/9m<0$ and the corresponding eigenvectors are 
\[
{\mathbf{v}}_{1}=(1,0)^{T}, \quad 
{\mathbf{v}}_{2}=(9^{-1}4g^{2}, -(9m)^{-1}4cg^{2})^{T}
\]
respectively.
Here, $T$ denotes the symbol of transpose.
Since the equilibrium $E_{0}$ is not hyperbolic, we apply the center manifold theory to study the dynamics near $E_{0}$.
See \cite{carr} and references therein for details about center manifold theory. 

First, we introduce the following transformation for \eqref{eq:IKYsh-f2}:
\[
\bar{\phi}=\phi-\dfrac{2}{3}g, \quad
\bar{\psi}=\psi.
\]
Then the following system holds:
\begin{equation}
\begin{cases}
\bar{\phi}'=\bar{\phi}^{2}\bar{\psi}+3^{-1}4g\bar{\phi}\bar{\psi}+9^{-1}4g^{2}\bar{\psi},
\\
\bar{\psi}'=-m^{-1}c\bar{\phi}^{2}\bar{\psi}-(3m)^{-1}4cg\bar{\phi}\bar{\psi}-(9m)^{-1}4cg^{2}\bar{\psi} -m^{-1}k\bar{\phi}^{3}-m^{-1}kg\bar{\phi}^{2}.
\end{cases}
\label{eq:IKYsh-b01}
\end{equation}

Next, we set the matrix $P$ as $P=(\mathbf{v}_{1},\mathbf{v}_{2})$.
Then, the equation \eqref{eq:IKYsh-b01} can be written as follows:
\begin{align*}
\left(\begin{array}{cc}
\bar{\phi}' \\
\bar{\psi}'
\end{array}
\right) 
&= \left(\begin{array}{cc}
0 & 4g^{2}/9 \\
0 & -4cg^{2}/9m
\end{array}
\right)\left(\begin{array}{cc}
\bar{\phi} \\
\bar{\psi}
\end{array}
\right)+{\mathbf{a}}
\\
&= PP^{-1}\left(\begin{array}{cc}
0 & 4g^{2}/9 \\
0 & -4cg^{2}/9m
\end{array}
\right)PP^{-1}\left(\begin{array}{cc}
\bar{\phi} \\
\bar{\psi}
\end{array}
\right)+{\mathbf{a}}
\\
&= P\left(\begin{array}{cc}
0 & 0 \\
0 & -4cg^{2}/9m
\end{array}
\right)P^{-1}\left(\begin{array}{cc}
\bar{\phi} \\
\bar{\psi}
\end{array}
\right)+{\mathbf{a}},
\end{align*}
where ${\mathbf{a}}$ is defined as follows:
\[
{\mathbf{a}}=\left(\begin{array}{cc}
\bar{\phi}^{2}\bar{\psi}+3^{-1}4g\bar{\phi}\bar{\psi} \\
-m^{-1}c\bar{\phi}^{2}\bar{\psi}-(3m)^{-1}4cg\bar{\phi}\bar{\psi}-m^{-1}k\bar{\phi}^{3}-m^{-1}kg\bar{\phi}^{2}
\end{array}
\right).
\]
Let 
\[
\left(\begin{array}{cc}
\tilde{\phi} \\
\tilde{\psi}
\end{array}
\right)=T^{-1}\left(\begin{array}{cc}
\bar{\phi} \\
\bar{\psi}
\end{array}
\right).
\]
We then obtain the following system:
\begin{equation}
\begin{cases}
\tilde{\phi}'=-c^{-1}kg\tilde{\phi}^{2}-(9c)^{-1}8kg^{5}\tilde{\phi}\tilde{\psi} -(81c)^{-1}16kg^{5}\tilde{\psi}^{2}+O(3), \\
\tilde{\psi}'=-(9m)^{-1}4cg^{2}\tilde{\psi}+(4cg)^{-1}9k\tilde{\phi}^{2}+(3mc)^{-1}(6kmg^{3}-4c^{2}g)\tilde{\phi}\tilde{\psi}\\
\quad\quad\quad +(27mc)^{-1}(12g^{3}-16c^{2}g^{3})\tilde{\psi}^{2}+O(3).
\end{cases}
\label{eq:IKYsh-b02}
\end{equation}
Here, $O(3)$ denotes terms of degree three or higher in $\tilde{\phi}$ and $\tilde{\psi}$.
The center manifold theory is applicable to study the dynamics of system \eqref{eq:IKYsh-b02}.
It implies that there exists a function $h(\tilde{\phi})$ satisfying
\[
h(0)=\dfrac{dh}{d\tilde{\phi}}(0)=0
\]
such that the center manifold of the origin for \eqref{eq:IKYsh-b02} is locally represented as $\{(\tilde{\phi},\tilde{\psi}) \,|\, \tilde{\psi}(s)=h(\tilde{\phi}(s))\}$.
Differentiating it with respect to $s$, we obtain that the approximation of the (graph of) center manifold is 
\[
\left\{ (\tilde{\phi}, \tilde{\psi}) \,|\, \tilde{\psi}=\dfrac{81km}{16c^{2}g^{3}}\tilde{\phi}^{2}+O(\tilde{\phi}^{3}) \right\}.
\]
Therefore, the dynamics of \eqref{eq:IKYsh-b02} near the origin is topologically conjugate to the dynamics of the following equation:
\[
\tilde{\phi}'=-\dfrac{kg}{c}\tilde{\phi}^{2}+O(\tilde{\phi}^{3}).
\]
Based on the above discussion and using the transformation backward with
\[
\bar{\phi}=\tilde{\phi}+\dfrac{4g^{2}}{9}\tilde{\psi}, \quad
\bar{\psi}=-\dfrac{4cg^{2}}{9m}\tilde{\psi}, \quad
\phi=\bar{\phi}+\dfrac{2}{3}g, \quad
\psi=\bar{\psi}
\]
the approximation of the center manifold near $E_0$ is
\begin{equation}
\left\{ (\phi, \psi) \,\,|\,\, \psi=-\dfrac{9k}{4cg}\left(\phi-\dfrac{2}{3}g\right)^{2}+O(\phi^{3}) \right\}
\label{eq:IKYsh-b03}
\end{equation}
and the dynamics of \eqref{eq:IKYsh-f2} near $E_{0}$ is topologically conjugate to the dynamics of the following equation:
\begin{equation}
\phi'=-\dfrac{kg}{c}\left(\phi-\dfrac{2}{3}g\right)^{2}+O(\phi^{3}).
\label{eq:IKYsh-b04}
\end{equation}

\subsection{Existence of the finite equilibria for $b>0$}
\label{sub:IKYsh-f1}
The following result shows the existence of finite equilibria for $b>0$.

\begin{prop}
\label{prop:IKYsh-f1}
Let $c, m, k, g, \varepsilon_{0}, S, V$ be positive constants, and let $b>0$.
Then, the following holds for the function $F(\phi)$ defined in \eqref{eq:IKYsh-f3}:
\begin{enumerate}
\item[(i)] 
$F(0)=F(g)=2^{-1} \varepsilon_{0} SV^{2}$ holds.
\item[(ii)] Equation
\[
5b\phi^{3}-12bg\phi^{2}+3(k+3bg^{2})\phi-2(kg+bg^{3})=0
\]
has a unique real root for $0 < \phi < g$, and we set $\phi^{*}$.
Then, the following relation holds:
\begin{equation}
\dfrac{10}{27}g < \phi^{*}<\dfrac{2}{3}g.
\label{eq:IKYsh-f4}
\end{equation}
\item[(iii)]
The roots to $F(\phi)=0$ for $\phi^{*}\in \mathbb{R}$ in (ii) can be classified as follows:
\begin{itemize}
\item 
If $F(\phi^{*})>0$, $F(\phi)>0$ holds for all $0 < \phi < g$.
In other words, there are no finite equilibria of \eqref{eq:IKYsh-f2}.
\item 
If $F(\phi^{*}) = 0$, then there is exactly one finite equilibrium of \eqref{eq:IKYsh-f2}, which is denoted by $(\phi, \psi)=(\phi^{*}, 0)$.
\item 
If $F(\phi^{*}) < 0$, then there are exactly two solutions that satisfy $F(\phi) = 0$ for $0 < \phi < g$, which is denoted by $\phi_{1}$ and $\phi_{2}$ with $0<\phi_{1}<\phi^{*}<\phi_{2}<g$.
That is, there are exactly two finite equilibria $(\phi, \psi) = (\phi_{1}, 0)$ and $(\phi_{2}, 0)$ of \eqref{eq:IKYsh-f2}.
\end{itemize}
\end{enumerate}
\end{prop}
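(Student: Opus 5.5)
The plan is to factor $F$ so that its dependence on $V$ is isolated, and then reduce everything to a sign analysis of the cubic in (ii). Write $C:=2^{-1}\varepsilon_{0}SV^{2}$ and $u:=\phi-g\in(-g,0)$. Since $b(\phi-g)^{3}=b\phi^{3}-3bg\phi^{2}+3bg^{2}\phi-bg^{3}$, expanding \eqref{eq:IKYsh-f3} should give
\[
F(\phi)=\phi^{2}(\phi-g)\bigl(k+b(\phi-g)^{2}\bigr)+C .
\]
Item (i) is then immediate, because the first term vanishes at $\phi=0$ and at $\phi=g$. Moreover, the first term is strictly negative on $(0,g)$ when $b>0$.

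Next I would differentiate this factored form. I expect
\[
F'(\phi)=\phi\,p(\phi),\qquad p(\phi)=5b\phi^{3}-12bg\phi^{2}+3(k+3bg^{2})\phi-2(kg+bg^{3}),
\]
so $p$ is exactly the cubic in (ii). In the variable $u$ it should split as
\[
p=k(3u+g)+b\,u^{2}(5u+3g).
\]
From this splitting the signs on two subintervals follow at once:
\begin{itemize}
\item for $u\le -3g/5$, i.e.\ $\phi\le 2g/5$, both terms are $\le 0$ and the first is strictly negative, so $p<0$;
\item for $u\ge -g/3$, i.e.\ $\phi\ge 2g/3$, both terms are $\ge 0$ and one is strictly positive, so $p>0$.
\end{itemize}
Hence every root of $p$ in $(0,g)$ lies in $(2g/5,2g/3)$. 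Since $2g/5>10g/27$, this already gives \eqref{eq:IKYsh-f4}.

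The main obstacle is uniqueness of the root on the middle interval $u\in(-3g/5,-g/3)$. Monotonicity of $p$ itself can fail there: $p'$ can become negative when $k$ is small relative to $bg^{2}$. Instead, on this interval I would rewrite $p=0$ as
\[
\frac{k}{b}=h(u):=-\frac{u^{2}(5u+3g)}{3u+g}.
\]
A quotient-rule computation gives the derivative of $q(u)=u^{2}(5u+3g)/(3u+g)$ with numerator
\[
(15u^{2}+6gu)(3u+g)-3u^{2}(5u+3g)=6u\,(5u^{2}+4gu+g^{2}).
\]
The quadratic $5u^{2}+4gu+g^{2}$ has discriminant $-4g^{2}<0$, so it is positive, while $u<0$. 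Therefore $q'<0$ and $h'>0$. Also $h(-3g/5)=0$ and $h\to+\infty$ as $u\to -g/3-0$. So $h$ is a strictly increasing bijection onto $(0,\infty)$, and $p$ has exactly one root $\phi^{*}$ in $(0,g)$; it lies in $(2g/5,2g/3)$. (Existence also follows from $p(0)<0<p(g)=kg$, but the bijection gives it directly.)

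For (iii), the sign pattern of $p$ shows that $F$ is strictly decreasing on $(0,\phi^{*})$ and strictly increasing on $(\phi^{*},g)$, with $F(0)=F(g)=C>0$. So $\phi^{*}$ is the unique minimizer of $F$ on $[0,g]$. If $F(\phi^{*})>0$, then $F>0$ on $(0,g)$. If $F(\phi^{*})=0$, then $\phi^{*}$ is the only zero. If $F(\phi^{*})<0$, the intermediate value theorem together with strict monotonicity on each side yields exactly one zero $\phi_{1}\in(0,\phi^{*})$ and exactly one zero $\phi_{2}\in(\phi^{*},g)$. Since the finite equilibria of \eqref{eq:IKYsh-f2} are exactly the points $(\phi,0)$ with $F(\phi)=0$, as shown at the start of this section, this gives the stated classification.
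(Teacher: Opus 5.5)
Your proof is correct, and for the key facts about the cubic (the paper's $G$, your $p$) it takes a genuinely different route from the paper.

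The paper works with $G(\phi)$ directly:
\begin{itemize}
\item after $G(0)<0<G(g)$, it proves uniqueness by a case split on the sign of $bg^{2}-5k$ (the discriminant of $G_{\phi}$);
\item in the non-monotone case $bg^{2}>5k$, it evaluates $G$ at the local minimum $\phi_{+}$ in closed form and checks positivity by squaring out radicals;
\item the bounds $10g/27<\phi^{*}<2g/3$ come separately from $G(10g/27)<0$ and $G(2g/3)=4bg^{3}/27>0$;
\item part (iii) is stated as a consequence of $F_{\phi}=\phi G$ with little detail.
\end{itemize}

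Your shift $u=\phi-g$ and the splitting $p=k(3u+g)+bu^{2}(5u+3g)$ into linear-spring and cubic-spring parts replace all of this. Sign-definiteness outside $(2g/5,2g/3)$ is read off term by term. On the middle interval, the parameter separation $k/b=h(u)$, with $h$ an increasing bijection onto $(0,\infty)$, gives existence and uniqueness with no case distinction and no radical computation.

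Your route also yields more. Since $\phi^{*}=g+h^{-1}(k/b)$ increases monotonically in $k/b$ from $2g/5$ to $2g/3$, your lower bound $2g/5$ is sharp and strictly improves the paper's $10g/27$. Correspondingly, the bound $x^{*}<17g/27$ in Theorem~\ref{thm:IKYsh-mr2}(ii) improves to $x^{*}<3g/5$. Your explicit monotonicity argument for $F$ also spells out the step the paper leaves implicit in (iii).

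The paper's argument is the more routine calculus-on-a-cubic approach, but it does not reveal this monotone dependence of $\phi^{*}$ on $k/b$.
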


Proposition \ref{prop:IKYsh-f1} (i) is shown immediately.
We will show (ii) and (iii).
Since 
\begin{equation}
F_{\phi}:=\dfrac{dF}{d\phi}=\phi\cdot \{ 5b\phi^{3}-12bg\phi^{2}+3(k+3bg^{2})\phi-2(kg+bg^{3})\}
\label{eq:IKYsh-f5}
\end{equation}
holds, the point $\phi$ for which $F_{\phi} = 0$ must satisfy either $\phi = 0$ or $G(\phi) = 0$.
Here $G(\phi)$ is defined as follows:
\begin{equation}
G(\phi) := 5b\phi^{3}-12bg\phi^{2}+3(k+3bg^{2})\phi-2(kg+bg^{3}).
\label{eq:IKYsh-f6}
\end{equation}
To prove Proposition \ref{prop:IKYsh-f1} (ii) and (iii), we need to clarify information about the function $G(\phi)$ and its zeros.
The following result can be obtained:

\begin{prop}
\label{prop:IKYsh-f2}
Let $c, m, k, g, \varepsilon_{0}, S, V$ be positive constants, and let $b>0$.
Then, the following holds for the function $G(\phi)$ defined by \eqref{eq:IKYsh-f6}:
\begin{enumerate}
\item[(i)] $G(0)<0$, $G(4g/5)>0$ and $G(g)>0$ hold.
\item[(ii)] For $0 < \phi < g$, there exists exactly one real root $\phi = \phi^{*}\in (0,g)$ such that $G(\phi) = 0$.
\item[(iii)] For any positive constants $b, k,$ and $g$, the following estimate always holds for $\phi^{*}$:
\begin{equation}
0<\dfrac{1}{3}g<\dfrac{10}{27}g<\phi^{*}<\dfrac{2}{3}g<g.
\label{eq:IKYsh-f7}
\end{equation}
\end{enumerate}
\end{prop}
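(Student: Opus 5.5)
We will work directly with the explicit cubic $G$ from \eqref{eq:IKYsh-f6}. The key first step is to split $G$ into its $k$-part and its $b$-part and factor the latter. The $b$-part $5\phi^{3}-12g\phi^{2}+9g^{2}\phi-2g^{3}$ vanishes at $\phi=g$, and dividing out $(\phi-g)$ twice gives
\[
G(\phi)=k(3\phi-2g)+b(g-\phi)^{2}(5\phi-2g).
\]
This factorization should drive every part of the proposition.

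For (i), we evaluate directly. $G(0)=-2(kg+bg^{3})<0$ and $G(g)=kg>0$ are immediate. For $G(4g/5)$ we use the factored form: $k(3\cdot\frac45 g-2g)=\frac25 kg$ and $b\cdot\frac{g^{2}}{25}\cdot 2g=\frac{2}{25}bg^{3}$, so $G(4g/5)>0$.

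For (ii), the factored form gives a sign analysis. On $(0,2g/5]$ both summands are $\le 0$ and the first is $<0$, so $G<0$. On $[2g/3,g)$ both summands are $\ge 0$, not both zero, so $G>0$. Hence every root in $(0,g)$ lies in the open interval $(2g/5,2g/3)$, and by the intermediate value theorem there is at least one. Since $G'$ need not keep one sign (its discriminant can be positive), I would not argue via monotonicity of $G$. Instead, on $(2g/5,2g/3)$ the equation $G=0$ is equivalent to
\[
\dfrac{k}{b}=R(\phi):=\dfrac{(g-\phi)^{2}(5\phi-2g)}{2g-3\phi},
\]
and it suffices to show that $R$ is strictly increasing there, with $R\to 0$ at $2g/5$ and $R\to+\infty$ at $2g/3$. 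For monotonicity I would use the logarithmic derivative
\[
\dfrac{R'}{R}=-\dfrac{2}{g-\phi}+\dfrac{5}{5\phi-2g}+\dfrac{3}{2g-3\phi}
\]
and bound each term on the interval. We have $-2/(g-\phi)\ge -6/g$ since $g-\phi>g/3$. Also $5/(5\phi-2g)>5/(4g/3)=15/(4g)$ and $3/(2g-3\phi)>3/(4g/5)=15/(4g)$. Summing gives $R'/R>3/(2g)>0$. This monotonicity step is the only place where a genuine estimate is needed, and I expect it to be the main obstacle. If the crude termwise bounds had failed, the fallback would be to clear denominators and check the sign of a quadratic numerator on the interval.

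For (iii), the localization already gives $\phi^{*}\in(2g/5,2g/3)$. Since $2g/5=\frac{54}{135}g>\frac{50}{135}g=\frac{10}{27}g>\frac13 g$, the chain \eqref{eq:IKYsh-f7} follows at once. This also yields \eqref{eq:IKYsh-f4} in Proposition~\ref{prop:IKYsh-f1}.
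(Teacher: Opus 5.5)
Your proof is correct, and it takes a genuinely different route from the paper's. The paper works with $G_\phi=15b\phi^2-24bg\phi+3(k+3bg^2)$ and splits into three cases according to the sign of $bg^2-5k$, which is the sign of the discriminant of $G_\phi$. In the case of two critical points $\phi_\pm$, it computes $G(\phi_+)$ in closed form and shows it is positive by comparing squares. It then obtains the localization in (iii) separately, by evaluating $G(10g/27)<0$ and $G(2g/3)=4bg^3/27>0$. Your identity $G(\phi)=k(3\phi-2g)+b(g-\phi)^2(5\phi-2g)$ makes both the case split and the computation of $G(\phi_+)$ unnecessary. The sign of $G$ outside $(2g/5,2g/3)$ is read off directly from the factors, and uniqueness inside reduces to one elementary estimate. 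You carried that estimate out correctly: each of the three terms of $R'/R$ is bounded below by its infimum over the interval, and these infima sum to $3/(2g)>0$. Your route also yields more than the paper's. The window $2g/5<\phi^*<2g/3$ is strictly sharper than $10g/27<\phi^*<2g/3$. Moreover, $R$ maps $(2g/5,2g/3)$ increasingly onto $(0,\infty)$, so both endpoints are optimal and $\phi^*=R^{-1}(k/b)$ is a strictly increasing function of $k/b$. Fed into Theorem~\ref{thm:IKYsh-mr2}~(ii), this improves the hard-spring bound to $g/3<x^*<3g/5$ (instead of $17g/27$). It also shows that the pull-in displacement increases monotonically with $b/k$. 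What the paper's derivative analysis buys is the full critical-point structure of the cubic $G$, but that extra information is not used anywhere else in the paper.
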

\begin{proof}
From the definition of $G(\phi)$, we obtain (i) since 
\[
G(0)=-2(kg+bg^{3})<0, \quad
G\left(\dfrac{4}{5}g\right)=\dfrac{2}{25}bg^{3}+\dfrac{2}{5}kg>0, \quad
G(g)=kg>0.
\]
For the function $G(\phi)$ defined in \eqref{eq:IKYsh-f6},
\begin{equation}
G_{\phi}:=\dfrac{dG}{d\phi}=15b\phi^{2}-24bg\phi+3(k+3bg^{2})
\label{eq:IKYsh-f8}
\end{equation}
holds.
For values of $\phi$ such that $G_{\phi}=0$, there are three possible outcomes depending on the sign of $bg^{2}-5k$:
\begin{itemize}
\item 
When $bg^{2}-5k<0$, $G_{\phi}>0$ holds for all values of $0<\phi<g$.
Therefore, combined with the result in (i), there exists exactly one real root $\phi=\phi^{*} \in (0,g)$ such that $G(\phi)=0$.
\item 
When $bg^{2}-5k=0$, there is exactly one value of $\phi$ for which $G_{\phi}=0$, and that is $\phi=4g/5$.
Therefore, since $G(4g/5) = 4kg/5 > 0$, and by the result in (i), there exists a unique root $\phi = \phi^{*}$ such that $G(\phi) = 0$.
\item  
When $bg^{2}-5k>0$, there are two values of $\phi\in(0,g)$ such that $G_{\phi}=0$. 
Let these be denoted by $\phi_{\pm}$.
Then we set 
\begin{equation}
\phi_{-}:=\dfrac{4bg- \sqrt{b^{2}g^{2}-5bk}}{5b}, \quad
\phi_{+}:=\dfrac{4bg+ \sqrt{b^{2}g^{2}-5bk}}{5b}.
\label{eq:IKYsh-f9}
\end{equation}
Here, since (i) and $\phi_{-}<4g/5<\phi_{+}$ hold, we see that $G(\phi_{-})>0$.
Furthermore,
\begin{equation}
G(\phi_{+})=-\dfrac{2}{25b}\{-bg(bg^{2}+5k)+\sqrt{b^{2}g^{2}-5bk}(bg^{2}-5k)\}>0
\label{eq:IKYsh-f10}
\end{equation}
holds.
This follows from the fact that $bg(bg^{2}+5k)>0$, $\sqrt{b^{2}g^{2}-5bk}(bg^{2}-5k)>0$ and 
\[
\{bg(bg^{2}+5k)\}^{2}-\{\sqrt{b^{2}g^{2}-5bk}(bg^{2}-5k)\}^{2}>0
\]
holds.
Hence, from $G(\phi_-) > 0$, $G(\phi_+) > 0$, and the results of (i) and (ii), we see that there exists a unique root $\phi = \phi^*$ such that $G(\phi) = 0$ for $0 < \phi < g$.
\end{itemize}
Regarding (iii), since
\[
G\left(\dfrac{10g}{27}\right)=-\dfrac{1156}{19683}bg^{3}-\dfrac{8}{9}kg<0, \quad
G\left(\dfrac{11g}{27}\right)=\dfrac{256}{19683}bg^{3}-\dfrac{7}{9}kg,
\]
it follows that $10g/27 < \phi^*$ holds whenever $b, g, k$  are positive.
Similarly, since $G(2g/3) = 4bg^3/27 > 0$, we obtain (iii).
Thus, this completes the proof of Proposition \ref{prop:IKYsh-f2}.
\end{proof}

We return to the proof of Proposition \ref{prop:IKYsh-f1}.
By Proposition \ref{prop:IKYsh-f2} (ii), there is always exactly one solution to $G(\phi)=0$ for $0<\phi< g$.
Since its value is given by \eqref{eq:IKYsh-f4}, we obtain Proposition \ref{prop:IKYsh-f1} (ii, iii).
Thus, this completes the proof of Proposition \ref{prop:IKYsh-f1}.
\qed
\\

For $b > 0$, we set the parameters and find the unique real root to $G(\phi) = 0$.
According to Proposition \ref{prop:IKYsh-f1}, this root is denoted by $\phi^{*}$.
Then, by substituting $\phi=\phi^{*}$ into $F(\phi)$ and determining its sign, we can determine whether there are 0, 1, or 2 finite equilibria.

\subsection{Existence of the finite equilibria for $b<0$}
\label{sub:IKYsh-bf1}
When $b < 0$, we set $b = -\beta$ and consider the case for $\beta > 0$.
That is, using this value of $\beta > 0$ in $F(\phi)$, as defined in \eqref{eq:IKYsh-f3}, we obtain
\begin{equation}
F(\phi)=-\beta \phi^{5}+3\beta g\phi^{4}+(k-3\beta g^{2})\phi^{3}-(kg-\beta g^{3})\phi^{2}+\dfrac{1}{2}\varepsilon_{0}SV^{2}.
\label{eq:IKYsh-bf1}
\end{equation}
The results regarding the existence of finite equilibria are as follow:

\begin{prop}
\label{prop:IKYsh-bf1}
Let $c, m, k, g, \varepsilon_{0}, S, V$ be positive constants.
Assume that $b=-\beta$ and $\beta>0$.
Then, the following holds for the function $F(\phi)$ defined in \eqref{eq:IKYsh-f3}:
\begin{enumerate}
\item[(i)] $F(0)=F(g)=2^{-1}\varepsilon_{0}SV^{2}$ holds.
\item[(ii)] Equation
\[
5\beta\phi^{3}-12\beta g\phi^{2}-3(k-3\beta g^{2})\phi+2(kg-\beta g^{3})=0
\]
has a unique real root for $0<\phi<g$, and we set $\phi=\phi_{*}$.
Then, the following relation holds:
\begin{equation}
\dfrac{2}{3}g<\phi_{*}<g.
\label{eq:IKYsh-bf2}
\end{equation}
In particular, if $k=\beta g^2$, then $\phi_{*}=\dfrac{6-\sqrt{6}}{5}g$ holds.
\item[(iii)]
The unique real root to $F(\phi)=0$ for $\phi_{*} \in \mathbb{R}$ in (ii) can be classified as follows:
\begin{itemize}
\item 
If $F(\phi_{*})>0$, then $F(\phi)>0$ holds for all $0< \phi <g$.
In other words, there are no finite equilibria of \eqref{eq:IKYsh-f2}.
\item 
If $F(\phi_{*})=0$, then there is exactly one finite equilibrium of \eqref{eq:IKYsh-f2}, which is denoted by $(\phi,\psi)=(\phi_{*}, 0)$.
\item 
If $F(\phi_{*})<0$, then there are exactly two solutions $\tilde{\phi}_{1} \in (0,g)$ and $\tilde{\phi}_{2}\in (0,g)$ that satisfy $F(\phi)=0$.
In addition, $0<\tilde{\phi}_{1}<\phi_{*}<\tilde{\phi}_{2}<g$ holds.
That is, there are exactly two finite equilibria $(\phi, \psi)=(\tilde{\phi}_{1}, 0)$ and $(\tilde{\phi}_{2}, 0)$ of \eqref{eq:IKYsh-f2}.
\end{itemize}
\end{enumerate}
\end{prop}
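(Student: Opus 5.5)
The plan is to mirror the proof of Proposition \ref{prop:IKYsh-f1}, and to reduce everything to the sign of a cubic. Part (i) is immediate from \eqref{eq:IKYsh-bf1}, since the $\phi$-dependent part of $F$ vanishes at $\phi=0$ and at $\phi=g$. Set
\[
H(\phi):=5\beta\phi^{3}-12\beta g\phi^{2}-3(k-3\beta g^{2})\phi+2(kg-\beta g^{3}).
\]
Substituting $b=-\beta$ into \eqref{eq:IKYsh-f5} and \eqref{eq:IKYsh-f6} gives $G=-H$, and hence
\[
F_{\phi}=-\phi\, H(\phi).
\]
So on $(0,g)$ the critical points of $F$ are exactly the zeros of $H$, and the sign of $F_{\phi}$ there is opposite to the sign of $H$.

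For (ii) I evaluate $H$ at the natural test points. The values are
\[
H(g)=-kg<0, \qquad H\!\left(\tfrac{2}{3}g\right)=\tfrac{4}{27}\beta g^{3}>0, \qquad H(0)=2(kg-\beta g^{3}).
\]
Since $H$ is a cubic with positive leading coefficient, $H(+\infty)=+\infty$, so $H$ has one root in $(2g/3,g)$ and another in $(g,\infty)$.

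The main obstacle is to exclude a third root inside $(0,2g/3)$. Unlike the case $b>0$, where $G(0)<0$ automatically, here the sign of $H(0)$ is not fixed. If $H(0)\ge 0$, that is $k\ge\beta g^{2}$, then $H(-\infty)=-\infty$ together with $H(0)\ge0$ places the third root in $(-\infty,0]$. Then exactly one root $\phi_{*}$ lies in $(0,g)$, and it satisfies \eqref{eq:IKYsh-bf2}.

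However, if $k<\beta g^{2}$, uniqueness appears to fail. For example, with $g=\beta=1$ and $k\to0$, $H$ tends to $5\phi^{3}-12\phi^{2}+9\phi-2=(\phi-1)^{2}(5\phi-2)$. This has a root at $2/5\in(0,1)$, which persists for small $k>0$ because then $H(0)<0<H(2g/3)$. I would therefore first check whether the paper implicitly requires the physically natural condition $k\ge\beta g^{2}$ (the soft spring keeping a nonnegative restoring force on $[0,g]$). I would state the proposition under that hypothesis, or else treat the extra root separately.

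The special case $k=\beta g^{2}$ is a direct check. There $H=\beta\phi(5\phi^{2}-12g\phi+6g^{2})$, whose roots in $(0,g)$ reduce to $\phi=(6-\sqrt6)g/5\approx0.71g$, which indeed lies in $(2g/3,g)$.

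For (iii), use the uniqueness of $\phi_{*}$ and the sign information above. On $(0,\phi_{*})$ we have $H>0$, so $F$ is strictly decreasing; on $(\phi_{*},g)$ we have $H<0$, so $F$ is strictly increasing. Hence $\phi_{*}$ is the unique minimizer of $F$ on $[0,g]$, and $F(0)=F(g)>0$. The three cases then follow from the sign of $F(\phi_{*})$ and the intermediate value theorem on each monotone branch:
\begin{itemize}
\item if $F(\phi_{*})>0$, then $F$ has no zeros in $(0,g)$;
\item if $F(\phi_{*})=0$, the only zero is $\phi_{*}$;
\item if $F(\phi_{*})<0$, there is exactly one zero on each side, $0<\tilde{\phi}_{1}<\phi_{*}<\tilde{\phi}_{2}<g$.
\end{itemize}
Finally, the discussion preceding \eqref{eq:IKYsh-f3} identifies zeros of $F$ with the finite equilibria $(\phi,0)$ of \eqref{eq:IKYsh-f2}.
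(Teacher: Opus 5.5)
\textbf{Your proposal is correct, and your objection to (ii) is justified.} Your $H$ is the paper's $\tilde G$ from \eqref{eq:IKYsh-bf4}; the paper reserves the letter $H$ for \eqref{eq:IKYsh-sp3}. For every $0<k<\beta g^{2}$, not only for small $k$, one has $\tilde G(0)=2g(k-\beta g^{2})<0<\tfrac{4}{27}\beta g^{3}=\tilde G(2g/3)$ and $\tilde G(g)=-kg<0$. So $\tilde G$ has two zeros in $(0,g)$, and the smaller one lies below $2g/3$. The statement is therefore false in that regime.

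The paper's proof breaks exactly in its first bullet. It records $\tilde G(0)<0$, $\tilde G(\tilde\phi_{-})>0$ with $0<\tilde\phi_{-}<g$, and $\tilde G(g)<0$. That sign pattern forces one zero in $(0,\tilde\phi_{-})$ and another in $(\tilde\phi_{-},g)$, yet the paper concludes there is exactly one. For $k\ge\beta g^{2}$ the paper's conclusion is correct and agrees with yours. Your reading of $k\ge\beta g^{2}$ as the condition that the restoring force $kx-\beta x^{3}$ stays nonnegative on $[0,g]$ is the right physical interpretation.

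\textbf{Comparison of routes.} The paper locates the critical points $\tilde\phi_{\pm}$ of $\tilde G$ and splits into the cases $k<\beta g^{2}$, $\beta g^{2}\le k<3\beta g^{2}$, $k=3\beta g^{2}$, $k>3\beta g^{2}$. It then uses $\tilde\phi_{-}<3g/5$ and $\tilde G(2g/3)>0$ to place $\phi_{*}$. You never differentiate $\tilde G$. Checking signs at $-\infty,0,2g/3,g,+\infty$ against the three-root bound for a cubic gives existence, uniqueness and the bound $2g/3<\phi_{*}<g$ in one stroke. It also isolates the sign of $\tilde G(0)$ as the only relevant quantity, which is precisely what exposes the error that the case analysis hides.

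\textbf{Completing your ``treat the extra root separately'' option.} The same device finishes it.
\begin{enumerate}
\item[(a)] If $0<k<\beta g^{2}$, the five signs give exactly one zero $\rho_{1}\in(0,2g/3)$ and exactly one zero $\rho_{2}\in(2g/3,g)$.
\item[(b)] On these intervals $\tilde G<0$ on $(0,\rho_{1})$, $\tilde G>0$ on $(\rho_{1},\rho_{2})$, and $\tilde G<0$ on $(\rho_{2},g)$.
\item[(c)] Since $F_{\phi}=-\phi\tilde G$, $F$ increases on $(0,\rho_{1})$, so $F>F(0)>0$ on $(0,\rho_{1}]$.
\item[(d)] On $[\rho_{1},g]$, $F$ decreases to its minimum at $\rho_{2}$ and then increases back to $F(g)>0$.
\end{enumerate}
Hence (iii) holds verbatim for all $k>0$ if $\phi_{*}$ is defined as the unique zero of $\tilde G$ in $(2g/3,g)$. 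Statement (ii) becomes correct once weakened to: $\tilde G$ has exactly one zero in $(2g/3,g)$, and it is the only zero in $(0,g)$ if and only if $k\ge\beta g^{2}$.

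This choice of $\phi_{*}$ is what the later arguments actually use, namely Proposition~\ref{prop:IKYsh-sp2} and Theorem~\ref{thm:IKYsh-mr2}. Note that Theorem~\ref{thm:IKYsh-mr2}'s claim that \eqref{eq:IKYsh-mr3} has exactly one root in $(0,g)$ inherits the same defect when $b<0$ and $k<|b|g^{2}$.
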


Proposition \ref{prop:IKYsh-bf1} (i) is shown immediately.
We will show (ii) and (iii).
Since
\begin{equation}
F_{\phi}(\phi):=\dfrac{dF}{d\phi}=-\phi\cdot \{ 5\beta\phi^{3}-12\beta g\phi^{2}-3(k-3\beta g^{2})\phi+2(kg-\beta g^{3}) \}
\label{eq:IKYsh-bf3}
\end{equation}
holds, the point $\phi$ for $F_{\phi}=0$ must satisfy either $\phi=0$ or $\tilde{G}(\phi)=0$.
Here $\tilde{G}(\phi)$ is defined as follows:
\begin{equation}
\tilde{G}(\phi) := 5\beta\phi^{3}-12\beta g\phi^{2}-3(k-3\beta g^{2})\phi+2(kg-\beta g^{3})
\label{eq:IKYsh-bf4}
\end{equation}
To prove Proposition \ref{prop:IKYsh-bf1} (ii) and (iii), we need to clarify information about the function  $\tilde{G}(\phi)$ and its zero points. 
The following result can be obtained.

\begin{prop}
\label{prop:IKYsh-bf2}
Let $c, m, k, g, \varepsilon_{0}, S, V$ be positive constants.
Assume that $\beta>0$ as $b=-\beta$.
Then, the following holds for the function $\tilde{G}(\phi)$ defined in \eqref{eq:IKYsh-bf4}:
\begin{enumerate}
\item[(i)] $\tilde{G}(g)=-kg<0$ holds.
\item[(ii)] There exists exactly one real root $\phi=\phi_{*} \in (0,g)$ such that $\tilde{G}(\phi)=0$.
\item[(iii)] For any positive constants $\beta$, $k$ and $g$, the following estimate always holds for $\phi_{*}$:
\begin{equation}
\dfrac{2}{3}g<\phi_{*}<g.
\label{eq:IKYsh-bf5}
\end{equation}
In particular, if $k=\beta g^2$, then $\phi_{*}=\dfrac{6-\sqrt{6}}{5}g$ holds.
\end{enumerate}
\end{prop}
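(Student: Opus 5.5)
The plan is to use the factorization
\[
\tilde{G}(\phi)=\beta(\phi-g)^{2}(5\phi-2g)+k(2g-3\phi),
\]
which follows from expanding $(\phi-g)^{2}(5\phi-2g)=5\phi^{3}-12g\phi^{2}+9g^{2}\phi-2g^{3}$ and comparing with \eqref{eq:IKYsh-bf4}. It separates the $\beta$-part from the $k$-part, and each piece has an obvious sign on explicit subintervals of $(0,g)$.

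\textbf{Part (i) and existence.} Substituting $\phi=g$ gives $\tilde{G}(g)=-kg<0$ at once. Substituting $\phi=2g/3$ gives $\tilde{G}(2g/3)=\beta\cdot\frac{g^{2}}{9}\cdot\frac{4g}{3}=\frac{4}{27}\beta g^{3}>0$. The intermediate value theorem then yields a root $\phi_{*}\in(2g/3,g)$, which is the estimate \eqref{eq:IKYsh-bf5} once uniqueness is known.

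\textbf{Uniqueness on $[2g/3,g)$.} Since $\tilde{G}$ is a cubic with positive leading coefficient $5\beta$ and $\tilde{G}(g)<0$, it has a root in $(g,\infty)$. Hence it has at most two roots in $(-\infty,g)$.

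To rule out a second root in $(2g/3,g)$, I will look at the derivative
\[
\tilde{G}_{\phi}=15\beta\phi^{2}-24\beta g\phi-3(k-3\beta g^{2}).
\]
For $\phi\in[2g/3,g]$ the quadratic part is $3\beta(5\phi^{2}-8g\phi+3g^{2})=3\beta(5\phi-3g)(\phi-g)\le 0$. Therefore $\tilde{G}_{\phi}\le -3k<0$ there, so $\tilde{G}$ is strictly decreasing on $[2g/3,g]$ and has exactly one root in that interval.

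\textbf{Positivity on $(0,2g/3)$.} On $[2g/5,2g/3)$ both terms of the factorization are nonnegative and the $k$-term is positive, so $\tilde{G}>0$ there.

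The remaining interval $(0,2g/5)$ is the step I expect to be the main obstacle, because there the term $\beta(\phi-g)^{2}(5\phi-2g)$ is negative. One has to show that $k(2g-3\phi)$ dominates it. First I will try the pointwise comparison
\[
k(2g-3\phi)\ \ge\ \beta g^{2}(2g-3\phi)\ \ge\ \beta(g-\phi)^{2}(2g-5\phi)
\]
on $(0,2g/5)$. This holds because $(g-\phi)^{2}\le g^{2}$ and $2g-5\phi<2g-3\phi$.

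This comparison requires $k\ge\beta g^{2}$, which is equivalent to $\tilde{G}(0)=2g(k-\beta g^{2})\ge 0$. That condition has to be verified, or assumed, in the physical regime the paper has in mind. I will flag it explicitly: if $\tilde{G}(0)<0$, then the sign change between $0$ and $2g/3$ would produce an extra zero. Granting this step, $\tilde{G}>0$ on $(0,2g/3)$, and together with the previous step this proves (ii) and the bound \eqref{eq:IKYsh-bf5} in (iii).

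\textbf{Special case.} If $k=\beta g^{2}$, then $\tilde{G}(\phi)=\beta\phi(5\phi^{2}-12g\phi+6g^{2})$. Its nonzero roots are $\phi=\frac{6\pm\sqrt{6}}{5}g$. Only $\frac{6-\sqrt{6}}{5}g\approx0.71g$ lies in $(0,g)$, and it indeed lies in $(2g/3,g)$, so $\phi_{*}=\frac{6-\sqrt{6}}{5}g$.
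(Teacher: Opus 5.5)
Your argument is correct for $k\ge\beta g^{2}$, and the hypothesis you flag cannot be verified in general, because without it the statement is false. For $0<k<\beta g^{2}$ one has $\tilde G(0)=2g(k-\beta g^{2})<0<\tfrac{4}{27}\beta g^{3}=\tilde G(2g/3)$ and $\tilde G(g)=-kg<0$. So $\tilde G$ has two zeros in $(0,g)$, one of them below $2g/3$. For example, $g=\beta=1$, $k=\tfrac12$ gives $\tilde G(0)=-1$, $\tilde G(\tfrac15)=\tfrac{3}{50}$, $\tilde G(1)=-\tfrac12$.

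Thus (ii), and with it the bound in (iii), hold exactly when $k\ge\beta g^{2}$. You should present this as a counterexample rather than ``grant'' the step. The paper's proof breaks at the same point. In its case $0<k<\beta g^{2}$ it records $\tilde G(0)<0$, $\tilde G(\tilde\phi_{-})>0$ and $\tilde G(g)<0$, and then asserts uniqueness, although these signs force a zero in each of $(0,\tilde\phi_{-})$ and $(\tilde\phi_{-},g)$. Physically, $k\ge\beta g^{2}$ says the softening spring force $x(k-\beta x^{2})$ stays restoring on the whole gap $0\le x\le g$, so it is the natural hypothesis to add.

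Where the statement is true, your route differs from the paper's. The paper computes the critical points $\tilde\phi_{\pm}$ of the cubic and splits into cases comparing $k$ with $\beta g^{2}$ and $3\beta g^{2}$. It then gets the bound from $\tilde\phi_{-}<3g/5$ and $\tilde G(2g/3)>0$. Your decomposition $\tilde G=\beta(\phi-g)^{2}(5\phi-2g)+k(2g-3\phi)$ avoids square roots and the case split. It also separates what holds for every $k>0$ from the single interval $(0,2g/5)$ where $k\ge\beta g^{2}$ is used. For every $k>0$ you get strict decrease on $[2g/3,g]$ and positivity on $[2g/5,2g/3)$, hence exactly one zero in $[2g/5,g)$, lying in $(2g/3,g)$.

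That unconditional part is what the later sections actually need. For $k<\beta g^{2}$ the extra zero is a local maximum of $F$. So take $\phi_{*}$ to be the zero in $(2g/3,g)$, where $F$ attains its minimum on $[0,g]$. With that choice, the trichotomy of Proposition~\ref{prop:IKYsh-bf1}(iii) and the saddle/sink classification of Proposition~\ref{prop:IKYsh-sp1}(ii) remain valid.
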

\begin{proof}
From the definition of $\tilde{G}(\phi)$, we obtain (i).
In addition, we have 
\begin{equation}
\tilde{G}_{\phi}(\phi):=\dfrac{d\tilde{G}}{d\phi}=15\beta\phi^{2}-24\beta g\phi-3(k-3\beta g^{2}).
\label{eq:IKYsh-bf6}
\end{equation}
Based on the relationship between the signs of $\tilde{G}(0)$ and $\tilde{G}_{\phi}(0)$, we obtain the following five cases about the zero points of the function $\tilde{G}_{\phi}$ and the shape of $\tilde{G}$:
\begin{itemize}
\item
When $0<k<\beta g^{2}$, there are two roots of $\phi$ for which $\tilde{G}_{\phi}=0$ holds and these are 
\begin{equation}
\tilde{\phi}_{-}:=\dfrac{4\beta g-\sqrt{\beta(\beta g^2+5k)}}{5\beta}, \quad
\tilde{\phi}_{+}:=\dfrac{4\beta g+\sqrt{\beta(\beta g^2+5k)}}{5\beta}
\label{eq:IKYsh-bf7}
\end{equation}
with
\begin{equation}
0<\tilde{\phi}_{-}<\dfrac{4}{5}g<g<\tilde{\phi}_{+}.
\label{eq:IKYsh-bf8}
\end{equation}
In addition, $\tilde{G}(0)<0$ and $\tilde{G}_{\phi}(0)>0$ holds.
From $\sqrt{\beta(\beta g^{2}+5k)}-\beta g>0$,
\begin{equation}
\tilde{G}(\tilde{\phi}_{-})
=\dfrac{2}{25\beta}\{ \beta^{2}g^{3}+\beta g^{2}\sqrt{\beta(\beta g^{2}+5k)}+5k(\sqrt{\beta(\beta g^{2}+5k)}-\beta g) \}
>0
\label{eq:IKYsh-bf9}
\end{equation}
holds.
Therefore, combined with the result in (i), there exists exactly one real root 
$\phi=\phi_{*} \in (0,g)$ such that $\tilde{G}(\phi)=0$.

\item
When $\beta g^{2} \le k <3\beta g^{2}$, the roots of $\phi$ for which $\tilde{G}_{\phi} = 0$ satisfy \eqref{eq:IKYsh-bf7} and \eqref{eq:IKYsh-bf8}.
Therefore, from \eqref{eq:IKYsh-bf9} and (i), there exists a unique real root $\phi=\phi_{*}$ in $(0,g)$ such that $\tilde{G}(\phi)=0$.

\item
When $k=3\beta g^2$, the values of $\phi$ for which $\tilde{G}_{\phi}=0$ are $\phi=0$ and $\phi=8g/5$.
Therefore, $\tilde{G}(0)>0$ and $\tilde{G}_{\phi}(\phi)<0$ holds in $\phi\in(0,g)$.
From the result of (i), there exists a unique real root $\phi=\phi_{*}$ in $(0,g)$ such that $\tilde{G}(\phi)=0$. 

\item
When $k > 3\beta g^2$, the $\phi$ for which $\tilde{G}_{\phi} = 0$ is $\tilde{\phi}_{+}$ as defined in \eqref{eq:IKYsh-bf7}.
In addition, $\tilde{\phi}_{+} > g$ holds.
By a similar above discussion, the uniqueness of $\phi = \phi_*$ is shown.
\end{itemize}

Thus, this completes the proof of (ii).
For (iii), since $\sqrt{\beta(\beta g^{2}+5k)}-\beta g>0$ holds, we have $\tilde{\phi}_{-}<3g/5$.
Hence, we obtain $\tilde{\phi}_{-}<2g/3$. 
Furthermore, we obtain \eqref{eq:IKYsh-bf5} from $\tilde{G}(2g/3) = 4\beta g^{3}/27 > 0$.
In particular, when $k = \beta g^2$, solving $ \tilde{G}(\phi) = 0$ yields $\phi_{*} = 5^{-1}(6 - \sqrt{6})g$.
Thus, this completes the proof of Proposition \ref{prop:IKYsh-bf2}.
\end{proof}

We return to the proof of Proposition \ref{prop:IKYsh-bf1}.
By Proposition \ref{prop:IKYsh-bf2} (ii), there is always exactly
one solution to $\tilde{G}(\phi)=0$ for $2g/3<\phi< g$.
Hence, we obtain the results of Proposition \ref{prop:IKYsh-bf1}(ii, iii).
Thus, this completes the proof of Proposition \ref{prop:IKYsh-bf1}.
\qed
\\

By Proposition \ref{prop:IKYsh-bf1}, even if $b<0$, we set parameters and find the unique real root to $\tilde{G}(\phi)=0$. 
This root is denoted by $\phi_*$.
Then, by substituting $\phi=\phi_{*}$ into $F(\phi)$ and determining its sign, we can determine whether there are 0, 1, or 2 finite equilibria.

\subsection{Stability of the finite positive equilibria}
\label{sub:IKYsh-sp1}
According to Propositions \ref{prop:IKYsh-f1} and \ref{prop:IKYsh-bf1}, the number of finite equilibria is either 0, 1, or 2, regardless of whether $b > 0$ or $b < 0$.
The purpose of this section is to discuss the local stability of these equilibria. 

\begin{prop}
\label{prop:IKYsh-sp1}
Let $c, m, k, g, \varepsilon_{0}, S, V$ be positive constants and $b\neq 0$.
Then the following holds:
\begin{enumerate}
\item[(i)] 
If $b>0$, the equilibrium $(\phi, \psi)=(\phi^{*}, 0)$ is not hyperbolic.
In addition, the equilibrium $(\phi_{1}, 0)$ is a saddle and $(\phi_{2}, 0)$ is local asymptotically stable.
\item[(ii)] 
If $b<0$, the equilibrium $(\phi, \psi)=(\phi_{*}, 0)$ is not hyperbolic.
In addition, the equilibrium $(\tilde{\phi}_{1}, 0)$ is a saddle and $(\tilde{\phi}_{2}, 0)$ is local asymptotically stable.
\end{enumerate}
\end{prop}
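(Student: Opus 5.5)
The approach is to compute the Jacobian of the desingularized system \eqref{eq:IKYsh-f2} at an arbitrary equilibrium $(\bar\phi,0)$ with $0<\bar\phi<g$, and read off its determinant and trace. Write the second component as $\psi'=-\frac{c}{m}\phi^{2}\psi-\frac{1}{m}\bigl(F(\phi)-\frac{1}{2}\varepsilon_{0}SV^{2}\bigr)-\frac{\varepsilon_{0}SV^{2}}{2m}=-\frac{c}{m}\phi^{2}\psi-\frac{1}{m}F(\phi)$, using \eqref{eq:IKYsh-f3}. At $(\bar\phi,0)$ we have $F(\bar\phi)=0$, so the Jacobian is
\[
J(\bar\phi)=\left(\begin{array}{cc} 0 & \bar\phi^{2}\\ -m^{-1}F_{\phi}(\bar\phi) & -m^{-1}c\bar\phi^{2}\end{array}\right).
\]
Hence $\det J=m^{-1}\bar\phi^{2}F_{\phi}(\bar\phi)$ and $\operatorname{tr}J=-m^{-1}c\bar\phi^{2}<0$. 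The classification then reduces to the sign of $F_{\phi}$ at each equilibrium.

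For the degenerate equilibrium, $F_{\phi}(\phi^{*})=\phi^{*}G(\phi^{*})=0$ by \eqref{eq:IKYsh-f5} (respectively $F_{\phi}(\phi_{*})=-\phi_{*}\tilde G(\phi_{*})=0$ by \eqref{eq:IKYsh-bf3}). So $J$ has eigenvalues $0$ and $-c(\phi^{*})^{2}/m$, and the equilibrium is not hyperbolic.

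For the other two equilibria, I would use the sign structure of $F$ established in Propositions \ref{prop:IKYsh-f1} and \ref{prop:IKYsh-bf1}. On $(0,g)$, $F_\phi$ vanishes only at the unique critical point $\phi^{*}$ (resp. $\phi_{*}$), and $F(0)=F(g)>0$ while $F$ at that point is negative. Hence $F$ is strictly decreasing on $(0,\phi^{*})$ and strictly increasing on $(\phi^{*},g)$. To make this rigorous, note that $G(0)<0<G(g)$ with a unique root, so $G<0$ on $(0,\phi^{*})$ and $G>0$ on $(\phi^{*},g)$. For $b<0$, $-\tilde G$ has the same sign pattern, since $\tilde G(g)<0$, a unique root exists, and $\tilde G(2g/3)>0$. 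Moreover $F_\phi\neq0$ at the simple zeros $\phi_1,\phi_2$, because these differ from the unique critical point.

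It follows that $F_\phi(\phi_1)<0$, so $\det J<0$ and $(\phi_1,0)$ is a saddle. Likewise $F_\phi(\phi_2)>0$, so $\det J>0$ together with $\operatorname{tr}J<0$ gives two eigenvalues with negative real part, and $(\phi_2,0)$ is locally asymptotically stable, either a node or a focus according to the discriminant $\operatorname{tr}^2-4\det$. The same argument applies verbatim to $\tilde\phi_{1},\tilde\phi_{2}$.

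The main care point is the sign of $G$ (and $\tilde G$) on each side of its root. This follows from the endpoint values and uniqueness, which Propositions \ref{prop:IKYsh-f2}(i) and \ref{prop:IKYsh-bf2}(i) supply. For $\tilde G$ we also need a point where it is positive; $\tilde G(2g/3)>0$ provides one. Finally, stability for \eqref{eq:IKYsh-f2} transfers to \eqref{eq:IKYsh-int8}, since the time rescaling by $\phi^{2}>0$ preserves orbits and their orientation.
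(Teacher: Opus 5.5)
Your route is the paper's route. You take the Jacobian at $(\bar\phi,0)$, whose lower-left entry is $-m^{-1}F_{\phi}(\bar\phi)=-m^{-1}\bar\phi\,G(\bar\phi)$, and classify the equilibria by the sign of $G$ at $\phi_{1},\phi_{2}$. Your determinant/trace bookkeeping and the remark on the time rescaling package this more cleanly. For $b>0$ your argument is correct: $G(0)<0<G(g)$ together with uniqueness gives exactly the sign pattern you state.

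The gap is in the $b<0$ branch. It comes from Proposition \ref{prop:IKYsh-bf2}(ii), which you invoke ("a unique root exists"), and which the paper also relies on when it says "simply replace $\phi^{*}$ with $\phi_{*}$". That uniqueness claim is false when $0<k<\beta g^{2}$. In that case $\tilde G(0)=2g(k-\beta g^{2})<0$, $\tilde G(2g/3)=4\beta g^{3}/27>0$ and $\tilde G(g)=-kg<0$. So $\tilde G$ has a second zero $\phi_{a}\in(0,2g/3)$ besides $\phi_{*}$. For example, $\beta=g=1$, $k=1/2$ gives $\tilde G(\phi)=5\phi^{3}-12\phi^{2}+\frac{15}{2}\phi-1$, with $\tilde G(0)=-1$, $\tilde G(2/3)=4/27$ and $\tilde G(1)=-1/2$. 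In this regime three of your claims fail: $\phi_{*}$ is not the only critical point of $F$ in $(0,g)$, $\tilde G$ is not positive on $(0,\phi_{*})$, and $F$ is not decreasing on $(0,\phi_{*})$. In fact $F$ increases on $(0,\phi_{a})$ and has a local maximum at $\phi_{a}$.

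The conclusion still holds after a short patch.
\begin{itemize}
\item $\tilde G$ is a cubic with $\tilde G(\phi)\to+\infty$ as $\phi\to+\infty$. Its sign changes on $(0,2g/3)$, $(2g/3,g)$ and $(g,\infty)$ therefore account for all three of its roots.
\item Hence $\tilde G<0$ on $(0,\phi_{a})$, $\tilde G>0$ on $(\phi_{a},\phi_{*})$, and $\tilde G<0$ on $(\phi_{*},g)$.
\item On $(0,\phi_{a}]$ we have $F_{\phi}=-\phi\tilde G>0$, so $F>F(0)>0$ there and no zero of $F$ lies in that interval.
\item This forces $\tilde\phi_{1}\in(\phi_{a},\phi_{*})$, so $F_{\phi}(\tilde\phi_{1})=-\tilde\phi_{1}\tilde G(\tilde\phi_{1})<0$.
\item The equilibrium $\tilde\phi_{2}\in(\phi_{*},g)$ is unaffected, since $F_{\phi}(\tilde\phi_{2})>0$.
\end{itemize}
With these signs your determinant/trace argument goes through unchanged. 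When $k\ge\beta g^{2}$ we have $\tilde G(0)\ge 0$, the third root of $\tilde G$ lies in $(-\infty,0]$, and your argument is correct as written.
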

\begin{proof}
We will consider the case where $b > 0$.
For the case where $b < 0$, simply replace $\phi^{*}$ with $\phi_{*}$.
These two cases are essentially the same.

The linearized matrix of the vector field \eqref{eq:IKYsh-f2} at the equilibrium $(\phi, \psi)=(\phi^{*}, 0)$ is 
\[
\left(\begin{array}{cc}
0 & (\phi^{*})^{2} \\ 0 & -cm^{-1}(\phi^{*})^{2}
\end{array}\right).
\]
Since the eigenvalues of this matrix are $0$ and $ -cm^{-1}(\phi^{*})^{2}$, the equilibrium $(\phi, \psi)=(\phi^{*}, 0)$ is not hyperbolic.
The linearized matrices $J_{i}$ ($i=1,2$) of the vector field \eqref{eq:IKYsh-f2} at $(\phi_{1}, 0)$ and $(\phi_{2}, 0)$ are
\[
J_{1}:=\left(\begin{array}{cc}
0 & (\phi_{1})^{2} \\ -m^{-1}\phi_{1}G(\phi_{1}) & -cm^{-1}(\phi_{1})^{2}
\end{array}\right), \quad
J_{2}:=\left(\begin{array}{cc}
0 & (\phi_{2})^{2} \\ -m^{-1}\phi_{2}G(\phi_{2}) & -cm^{-1}(\phi_{2})^{2}
\end{array}\right).
\]
Note that $\phi_{1}<\phi^{*}<\phi_{2}$ holds, we see that $G(\phi_{1})<0$ and $G(\phi_{2})>0$.
Therefore, since the matrix $J_{1}$ has one positive real eigenvalue and one negative real eigenvalue, we conclude that the equilibrium  $(\phi_{1}, 0)$ is a saddle.
Furthermore, since all the real parts of the eigenvalues of the matrix $J_{2}$ are negative, we conclude that the equilibrium $(\phi_{2}, 0)$ is locally asymptotically stable.
\end{proof}

The equilibria $(\phi, \psi)=(\phi^{*}, 0)$ and $(\phi_{*}, 0)$ are not hyperbolic.
The dynamical systems near these equilibria can be understood through the transformation to normal forms and the center manifold theorem (\cite{carr}).

First, we introduce the following transformations for \eqref{eq:IKYsh-f2}:
\[
\tilde{\phi}=\phi-\varphi^{*}, \quad
\tilde{\psi}=\psi
\]
with 
\[
\varphi^{*}:=\begin{cases}
\phi^{*} \quad & {\rm{if}} \quad b>0, \\
\phi_{*} \quad & {\rm{if}} \quad b<0.
\end{cases}
\]
We then have
\begin{equation}
\begin{cases}
\tilde{\phi}'=\tilde{\phi}^{2}\tilde{\psi}+2\varphi^{*}\tilde{\phi}\tilde{\psi}+(\varphi^{*})^{2}\tilde{\psi},
\\
\tilde{\psi}'=
-m^{-1}c(\varphi^{*})^{2}\tilde{\psi}
-2cm^{-1}\varphi^{*}\tilde{\phi}\tilde{\psi}
+f_{1}\tilde{\phi}^{2}
-m^{-1}c\tilde{\phi}^{2}\tilde{\psi}
+f_{2}\tilde{\phi}^{3}
+f_{3}\tilde{\phi}^{4}
-m^{-1}b\tilde{\phi}^{5},
\end{cases}
\label{eq:IKYsh-sp1}
\end{equation}
where $f_{i}$ ($i=1,2,3$) are defined as follows:
\begin{align*}
&f_{1}:=\dfrac{kg+bg^{3}}{m}-3\varphi^{*}\cdot\dfrac{k+3bg^{2}}{m}-\dfrac{10b}{m}(\varphi^{*})^{3}, \\
&f_{2}:=\dfrac{12bg}{m}\varphi^{*}-\dfrac{k+3bg^{2}}{m}-\dfrac{10b}{m}(\varphi^{*})^{2}, \\
&f_{3}:=\dfrac{3bg}{m}-\dfrac{5b}{m}\varphi^{*}.
\end{align*}

Next, focusing on the equilibrium $( \tilde{\phi}, \tilde{\psi}) = (0, 0)$ of the system \eqref{eq:IKYsh-sp1}, the eigenvectors corresponding to the eigenvalues $0$ and $-m^{-1}c(\varphi^{*})^{2}$ are 
\[
{\mathbf{v}}_{1}=(1,0)^{T}, \quad 
{\mathbf{v}}_{2}=(1, -cm^{-1})^{T}.
\]
Let $P=(\mathbf{v}_{1},\mathbf{v}_{2})$.
Then, the equation \eqref{eq:IKYsh-sp1} changes as follows:
\begin{align*}
\left(\begin{array}{cc}
\tilde{\phi}' \\
\tilde{\psi}'
\end{array}
\right) 
&= \left(\begin{array}{cc}
0 & (\varphi^{*})^{2} \\
0 & -cm^{-1}(\varphi^{*})^{2}
\end{array}
\right)\left(\begin{array}{cc}
\tilde{\phi} \\
\tilde{\psi}
\end{array}
\right)+{\mathbf{b}}
\\
&= PP^{-1}\left(\begin{array}{cc}
0 & (\varphi^{*})^{2} \\
0 & -cm^{-1}(\varphi^{*})^{2}
\end{array}
\right)PP^{-1}\left(\begin{array}{cc}
\tilde{\phi} \\
\tilde{\psi}
\end{array}
\right)+{\mathbf{b}}
\\
&= P\left(\begin{array}{cc}
0 & 0 \\
0 & -cm^{-1}(\varphi^{*})^{2}
\end{array}
\right)P^{-1}\left(\begin{array}{cc}
\tilde{\phi} \\
\tilde{\psi}
\end{array}
\right)+{\mathbf{b}},
\end{align*}
where ${\mathbf{b}}$ is defined as follows:
\[
{\mathbf{b}}=\left(\begin{array}{cc}
\tilde{\phi}^{2}\tilde{\psi}+2\varphi^{*}\tilde{\phi}\tilde{\psi}+(\varphi^{*})^{2}\tilde{\psi}
 \\
-2cm^{-1}\varphi^{*}\tilde{\phi}\tilde{\psi}
+f_{1}\tilde{\phi}^{2}
-m^{-1}c\tilde{\phi}^{2}\tilde{\psi}
+f_{2}\tilde{\phi}^{3}
+f_{3}\tilde{\phi}^{4}
-m^{-1}b\tilde{\phi}^{5}
\end{array}
\right)
\]

Next, we set 
\[
\left(\begin{array}{cc}
\hat{\phi} \\
\hat{\psi}
\end{array}
\right)=P^{-1}\left(\begin{array}{cc}
\tilde{\phi} \\
\tilde{\psi}
\end{array}
\right).
\]
We then have
\begin{equation}
\begin{cases}
\hat{\phi}'=m^{-1}c(\varphi^{*})^{2}\hat{\psi}-c^{-1}H(\varphi^{*})\tilde{\phi}^{2}+ {\rm h.o.t}, 
\\
\hat{\psi}'=-2m^{-1}c(\varphi^{*})^{2}\hat{\psi}+c^{-1}H(\varphi^{*})\tilde{\phi}^{2}+ {\rm h.o.t}
\end{cases}
\label{eq:IKYsh-sp2}
\end{equation}
with $\varphi^{*}=\phi^{*}$ or $\varphi^{*}=\phi_{*}$.
Here, the function $H(\varphi^{*})$ is defined as follows:
\begin{equation}
H(\varphi^{*}):= 10b(\varphi^{*})^{3}-18bg(\varphi^{*})^{2}+(9bg^{2}+3k)\varphi^{*}-bg^{3}-kg.
\label{eq:IKYsh-sp3}
\end{equation}
We apply the center manifold theorem for \eqref{eq:IKYsh-sp2}.
It implies that there exists a function $h(\hat{\phi})$ satisfying
\[
h(0)=\dfrac{dh}{d\hat{\phi}}(0)=0
\]
such that the center manifold of the origin for \eqref{eq:IKYsh-sp2} is locally represented as $\{(\hat{\phi},\hat{\psi}) \,|\, \hat{\psi}(s)=h(\hat{\phi}(s))\}$.
Differentiating it with respect to $s$, we obtain that the approximation of the (graph of) center manifold is
\[
\left\{ (\hat{\phi}, \hat{\psi}) \,|\, \hat{\psi}=\dfrac{m}{2c^{2}(\varphi^{*})^{2}}H(\varphi^{*})\cdot \hat{\phi}^{2}+ {\rm h.o.t} \right\}.
\]
Therefore, the dynamics of \eqref{eq:IKYsh-sp2} near the origin is topologically conjugate to the dynamics of the following equation:
\[
\hat{\phi}'=-\dfrac{1}{c}H(\varphi^{*})\cdot \hat{\phi}^{2}+ {\rm h.o.t}.
\]
According to the above discussion, using
\[
\tilde{\phi}=\hat{\phi}+\hat{\psi}, \quad
\tilde{\psi}=-\dfrac{c}{m}\hat{\psi}, \quad
\phi=\tilde{\phi}+\varphi^{*}, \quad
\psi=\tilde{\psi},
\]
the approximation of the (graph of) center manifold is as follows:
\begin{equation}
\left\{ (\phi, \psi) \,\,|\,\, \psi= -\dfrac{H(\varphi^{*})}{2c(\varphi^{*})^{2}}\cdot (\phi-\varphi^{*})^{2}+ {\rm h.o.t} \right\}.
\label{eq:IKYsh-sp4}
\end{equation}
In addition, the dynamics of \eqref{eq:IKYsh-sp2} near the finite equilibrium $(\phi, \psi)=(\varphi^{*}, 0)$ is topologically conjugate to the dynamics of the following equation:
\begin{equation}
\phi'=-\dfrac{H(\varphi^{*})}{2c}(\phi-\varphi^{*})^{2}+ {\rm h.o.t}.
\label{eq:IKYsh-sp5}
\end{equation}

Finally, we can obtain the following result regarding the sign of $H(\varphi^{*})$.

\begin{prop}
\label{prop:IKYsh-sp2}
Let $c, m, k, g, \varepsilon_{0}, S, V$ be positive constants and $b\neq 0$.
Then, the following holds for $H(\varphi^{*})$, as defined by \eqref{eq:IKYsh-sp3}:
\begin{itemize}
\item If $b>0$, $H(\phi^{*})>0$ holds for $10g/27<\phi^{*}<2g/3$.
\item If $b<0$, $H(\phi_{*})>0$ holds for $2g/3<\phi_{*}<g$.
\end{itemize}
\end{prop}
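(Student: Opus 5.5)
The plan is to avoid evaluating $H(\varphi^{*})$ at an unknown root. Instead, I would use the defining relation $G(\varphi^{*})=0$ (respectively $\tilde{G}(\phi_{*})=0$, which is the same polynomial written with $b=-\beta$) to reduce $H$ to a factored expression whose sign is easy to read. The two cases $b>0$ and $b<0$ can then be handled by a single identity, with only the location bounds from Propositions \ref{prop:IKYsh-f2} and \ref{prop:IKYsh-bf2} entering at the end.

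\emph{Step 1: reduce $H$ modulo $G$.} Subtracting $2G$ from $H$ removes the cubic term:
\[
H(\phi)-2G(\phi)=6bg\phi^{2}-(9bg^{2}+3k)\phi+3(kg+bg^{3}).
\]
Regrouping by $k$ and $b$ and factoring $2\phi^{2}-3g\phi+g^{2}=(2\phi-g)(\phi-g)$ gives
\[
H(\phi)-2G(\phi)=3(g-\phi)\bigl(k-bg(2\phi-g)\bigr).
\]
Hence, at the root,
\[
H(\varphi^{*})=3(g-\varphi^{*})\bigl(k-bg(2\varphi^{*}-g)\bigr).
\]
Since $0<\varphi^{*}<g$, it suffices to show $k>bg(2\varphi^{*}-g)$.

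\emph{Step 2: eliminate $k$ using $G(\varphi^{*})=0$.} For $b<0$ this is immediate: $\varphi^{*}=\phi_{*}>2g/3>g/2$ gives $bg(2\varphi^{*}-g)<0<k$. For $b>0$ the bound $\varphi^{*}<2g/3$ only yields $2\varphi^{*}-g<g/3$, which does not beat $k$ automatically. I expect this to be the main obstacle, and I would handle it by solving $G(\varphi^{*})=0$ for $k$. Write
\[
G(\phi)=b(5\phi^{3}-12g\phi^{2}+9g^{2}\phi-2g^{3})+k(3\phi-2g),
\]
and note that the cubic vanishes at $\phi=g$ and factors as $(\phi-g)^{2}(5\phi-2g)$. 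This requires $\varphi^{*}\neq 2g/3$, which holds by \eqref{eq:IKYsh-f7} and \eqref{eq:IKYsh-bf5}. Then
\[
k=\frac{b(g-\varphi^{*})^{2}(5\varphi^{*}-2g)}{2g-3\varphi^{*}}.
\]

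\emph{Step 3: sign of the resulting expression.} Substituting this expression for $k$ and writing $u=\varphi^{*}/g$,
\[
k-bg(2\varphi^{*}-g)=\frac{b g^{3}\bigl[(1-u)^{2}(5u-2)-(2u-1)(2-3u)\bigr]}{2g-3\varphi^{*}}=\frac{b g^{3}\,u(5u^{2}-6u+2)}{2g-3\varphi^{*}}.
\]
The quadratic $5u^{2}-6u+2$ has negative discriminant ($36-40<0$), so it is always positive, and $u>0$. Therefore the sign equals the sign of $b/(2g-3\varphi^{*})$. For $b>0$ we have $\varphi^{*}=\phi^{*}<2g/3$ by \eqref{eq:IKYsh-f7}, and for $b<0$ we have $\varphi^{*}=\phi_{*}>2g/3$ by \eqref{eq:IKYsh-bf5}. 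In both cases the quotient is positive, so $H(\varphi^{*})>0$. The only care needed is the routine polynomial expansion in Step 3, which I would double-check symbolically.
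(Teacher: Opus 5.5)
Your argument is correct, and it takes a genuinely different route from the paper's. I checked $H-2G=3(g-\phi)\bigl(k-bg(2\phi-g)\bigr)$, $G=b(\phi-g)^{2}(5\phi-2g)+k(3\phi-2g)$, and the final factor $u(5u^{2}-6u+2)$.

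The paper uses both $F(\varphi^{*})=0$ and $G(\varphi^{*})=0$ to write $H(\varphi^{*})=\hat{H}(\varphi^{*})/(\varphi^{*})^{2}$, where $\hat{H}(\phi)=3b\phi^{4}(\phi-g)+\tfrac{3}{2}\varepsilon_{0}SV^{2}$. The sign is therefore tied to the pull-in voltage. The case $b<0$ is then immediate. For $b>0$ the paper splits into cases on the sign of $\hat{H}(4g/5)$, and in the last subcase runs a contradiction argument aimed at $\hat{H}(2g/3)>0$.

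You instead reduce modulo $G$ alone and eliminate $k$, which gives the closed form $H(\varphi^{*})=3(g-\varphi^{*})\,bg^{3}u(5u^{2}-6u+2)/(2g-3\varphi^{*})$. This has two advantages:
\begin{itemize}
\item the conclusion does not depend on $V$ and holds at any root of $G$ on the appropriate side of $2g/3$, whether or not $F(\varphi^{*})=0$ (natural, since $H$ does not involve $V$);
\item one identity covers both signs of $b$.
\end{itemize}
A minor point: $\tilde{G}$ is $-G$ with $b=-\beta$, not $G$ itself. This is harmless for the roots.

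Your route also repairs the paper's $b>0$ case. The paper's intermediate target $\hat{H}(2g/3)>0$ is not true in general. At pull-in, $\tfrac{1}{2}\varepsilon_{0}SV^{2}=(\phi^{*})^{2}(g-\phi^{*})\bigl(b(g-\phi^{*})^{2}+k\bigr)$. Take $g=b=1$ and small $k>0$; then $\phi^{*}\approx 2/5$, so $\tfrac{1}{2}\varepsilon_{0}SV^{2}\approx 108/3125<16/243$, i.e.\ $\hat{H}(2g/3)<0$. Here $\hat{H}(4g/5)<0$ as well, so this is exactly the subcase in question. The step leading to \eqref{eq:IKYsh-sp10} replaces $\tfrac{16}{243}bg^{5}$ by $\tfrac{16}{243}b(\phi^{*})^{5}$, which goes the wrong way since $\phi^{*}<g$. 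Your formula gives $H(\phi^{*})\approx 0.36>0$ in this regime, so it provides a valid proof exactly where the paper's argument breaks down.
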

\begin{proof}
When $b > 0$, then $\varphi^{*} = \phi^{*}$, and when $b < 0$, $\varphi^{*} = \phi_{*}$.
When $b > 0$, then $\phi^{*}$ is a solution to $G(\phi) = 0$ as defined in \eqref{eq:IKYsh-f6}, and when $b < 0$, then $\phi_{*}$ is a solution to $\tilde{G}(\phi) = 0$ as defined in \eqref{eq:IKYsh-bf4}.
Furthermore, $G(\phi^{*}) = \tilde{G}(\phi_{*}) = 0$ holds.
Hereinafter, we will consider the behavior of the function $H(\varphi^{*})$ under the condition $F(\varphi^{*}) = G(\varphi^{*}) = 0$.
From $F(\varphi^{*})=G(\varphi^{*})=0$, we have
\begin{equation}
H(\varphi^{*}) =3b(\varphi^{*})^{3}-3bg(\varphi^{*})^{2}+\dfrac{3}{2}\varepsilon_{0}SV^{2} (\varphi^{*})^{-2}
= \dfrac{3b(\varphi^{*})^{5}-3bg(\varphi^{*})^{4}+\dfrac{3}{2}\varepsilon_{0}SV^{2}}{(\varphi^{*})^{2}}.
\label{eq:IKYsh-sp6}
\end{equation}
Therefore, the problem of determining the sign of the function $H(\varphi^{*})$ reduces to the problem of determining the sign of the function $\hat{H}(\varphi^{*})$, which is defined as follows:
\begin{equation}
\hat{H}(\varphi^{*}):=3b(\varphi^{*})^{5}-3bg(\varphi^{*})^{4}+\dfrac{3}{2}\varepsilon_{0}SV^{2}.
\label{eq:IKYsh-sp7}
\end{equation}
Here, $\hat{H}(0)=\hat{H}(g)=2^{-1}3\varepsilon_{0}SV^{2}>0$ and 
\[
\hat{H}_{\varphi^{*}}(\varphi^{*}):= \dfrac{d\hat{H}}{d\varphi^{*}}=3b(\varphi^{*})^{3}(5\varphi^{*}-4g)
\]
and $\hat{H}_{\varphi^{*}}(0)=0$, $\hat{H}_{\varphi^{*}}(g)=3bg^{4}$ holds.
The equation $\hat{H}_{\varphi^{*}}(\varphi^{*})=0$ holds if and only if $\varphi^{*}=0$ or $\varphi^{*}=4g/5$.
In addition,
\begin{equation}
\hat{H}(4g/5):=-\dfrac{768}{3125}bg^{5}+\dfrac{3}{2}\varepsilon_{0}SV^{2}
\label{eq:IKYsh-sp8}
\end{equation}
holds.
\begin{enumerate}
\item[(i)]
When $b>0$, then the graph of $\hat{H}(\varphi^{*})$ varies depending on the sign of $\hat{H}(4g/5)$.
\begin{itemize}
\item If $\hat{H}(4g/5)>0$, then $\hat{H}(\phi^{*})>0$ holds for $10g/27<\phi^{*}<2g/3$.
\item If $\hat{H}(4g/5)=0$, then $\hat{H}(\phi^{*})>0$ holds from $10g/27<\phi^{*}<2g/3<4g/5$.
\item  We consider the case in $\hat{H}(4g/5)<0$.
It is sufficient to show that $\hat{H}(2g/3)>0$.
We will therefore prove this by contradiction.
That is, we will assume that $\hat{H}(2g/3)<0$.
Then 
\begin{equation}
\dfrac{1}{2}\varepsilon_{0}SV^{2}< \dfrac{16}{243}bg^{5}
\label{eq:IKYsh-sp9}
\end{equation}
holds.
Since $\varphi^{*}=\phi^{*}$ satisfies $F(\phi)=0$, applying \eqref{eq:IKYsh-sp9} yields 
\begin{equation}
(\phi^{*})^{2}\hat{F}(\phi^{*})=
(\phi^{*})^{2}\left\{ \dfrac{259}{243}b(\phi^{*})^{3}-3bg(\phi^{*})^{2}+(k+3bg^{2})\phi^{*}-(kg+bg^{3}) \right\}>0
\label{eq:IKYsh-sp10}
\end{equation}
with 
\[
\hat{F}(\phi^{*})=\dfrac{259}{243}b(\phi^{*})^{3}-3bg(\phi^{*})^{2}+(k+3bg^{2})\phi^{*}-(kg+bg^{3}) .
\]
Here, $\hat{F}(0)<0$ and $\hat{F}(g)>0$ and 
\[
\hat{F}(2g/3)=-\dfrac{115}{6561}bg^{3}-\dfrac{1}{3}kg<0
\]
holds.
Furthermore, since $d\hat{F}/d\phi^* > 0$ for $0 \leq \phi^* \leq g$, it follows that $\hat{F}(\phi^*) < 0$ for $10g/27 < \phi^* < 2g/3$.
This contradicts \eqref{eq:IKYsh-sp10}.
Thus, we obtain $\hat{H}(2g/3) > 0$.
Therefore, $\hat{H}(\phi^{*}) > 0$ holds for $10g/27 < \phi^{*} < 2g/3$.
\end{itemize}
\item[(ii)]
We can see that $\hat{H}_{\phi_{*}}(\phi_{*}) > 0$ for $b < 0$.
This is shown by the fact that $\hat{H}_{\phi_{*}}(\phi_{*}) = 0$ holds if and only if $\phi_{*} = 0$ or $\phi_{*} = 4g/5$ holds.
Then we obtain $\hat{H}(4g/5) > \hat{H}(0) = \hat{H}(g) > 0$.
\end{enumerate}
Thus, the proof of Proposition \ref{prop:IKYsh-sp2} regarding $H(\varphi^{*})$, which is defined in \eqref{eq:IKYsh-sp3}, is complete.
\end{proof}

\section{Dynamics at infinity for $b=0$}
\label{sec:IKYsh-d}
In this section, we analyze the dynamical systems at infinity for \eqref{eq:IKYsh-int8} and \eqref{eq:IKYsh-f2} with $b=0$. 
To achieve this, we first introduce the definition of Poincar\'e-type compactification and its local coordinate transformations.

\subsection{Short review of Poincar\'e-type compactifications inducing dynamics at infinity}
\label{sub:IKYsh-d1}
Let us examine the dynamics of equation \eqref{eq:IKYsh-f2} at infinity without considering the initial conditions.
The system \eqref{eq:IKYsh-f2} is an asymptotically quasi-homogeneous vector field of type $(1,1)$ and order $3$ at infinity for $b=0$.
See \cite{Matsue1, Matsue2, UBTW} for the definition and details.
This notion guarantees that the structure of a dynamical system at infinity can be correctly extracted.
Matsue (\cite{Matsue1, Matsue2}) has shown that the scale invariance known as the homogeneity of the vector field must hold at infinity.

Full dynamics $\Phi=\mathbb{R}^{2}\, \cup\, \{(\phi,\psi) \mid \|(\phi,\psi)\|=+\infty \}$ of \eqref{eq:IKYsh-f2}, including those at infinity, are induced by Poincar\'e-type compactification.
$\Phi$ is hereinafter referred to as a Poincar\'e-type disk.
The outline of this method is described below.
See \cite{FAL, QTW, UPKPP, UBTW, Matsue1, Matsue2} and the references therein for details.

Consider a plane $(y_{1},y_{2},y_{3})=(\phi,\psi,1)$ and a sphere in $\mathbb{R}^{3}$, along with their partitions ($k=1,2,3$):
\begin{align*}
\mathbb{S}^{2} &= \{ y \in \mathbb{R}^{3} \, |\, y_{1}^{2} + y_{2}^{2}+y_{3}^{2}=1\}, \quad
\mathbb{S}^{1}=\{y \in \mathbb{S}^{2}\, | \, y_{3}=0\},\\
H_{+} &= \{ y \in \mathbb{S}^{2}\,|\,y_{3}>0\}, \quad
H_{-} = \{ y \in \mathbb{S}^{2}\,|\,y_{3}<0\}, \\
U_{k} &= \{y \in \mathbb{S}^{2} \, | \, y_{k}>0\}, \quad
V_{k} = \{y \in \mathbb{S}^{2} \, | \, y_{k}<0\}.
\end{align*}
Let $\Delta(\phi,\psi)$ be defined as $\Delta(\phi,\psi) = \sqrt{\phi^{2}+\psi^{2}+1}$.
Furthermore, we define the map $f^{\pm}$ from $(\phi, \psi)\in \mathbb{R}^{2}$ to $H^{\pm}$ as 
\[
f^{\pm}(\phi,\psi):= \pm \left( \dfrac{\phi}{\Delta(\phi,\psi)},\dfrac{\psi}{\Delta(\phi,\psi)},\dfrac{1}{\Delta(\phi,\psi)} \right)
\]
and the projection $g^{\pm}_{k}$:
\[
g^{+}_{k} : U_{k} \to \mathbb{R}^{2} \quad {\rm and} 
\quad g^{-}_{k} : V_{k} \to \mathbb{R}^{2} 
\]
as 
\[
g^{+}_{k}(y_{1},y_{2},y_{3}) = - g^{-}_{k}(y_{1},y_{2},y_{3})
 = (y_{m}/y_{k}, y_{n}/y_{k})
 \]
 for $m<n$ and $m,n\neq k$.
The projected vector fields are obtained as the vector fields on the planes
 \begin{align*}
\overline{U}_{k} = \{y \in \mathbb{R}^{3} \, | \, y_{k} = 1\}, \\
\overline{V}_{k} = \{y \in \mathbb{R}^{3} \, | \, y_{k} = -1\}
\end{align*}
for each local charts $U_{k}$ and $V_{k}$ ($k=1,2,3$).
In addition, we define $(x, \lambda) := g^{\pm}_{k}(y)$.

For instance, we consider the case $k=1$.
It follows that the projection from $\mathbb{R}^2$ to $\overline{U}_1$ as 
\[
(g^{+}_{1} \circ f^{+})(\phi,\psi) 
= \left ( \dfrac{\psi}{\phi},\dfrac{1}{\phi}\right) = (x, \lambda).
\]
See also Figure \ref{fig:IKYsh-d1}.
Therefore, we can obtain the dynamics on the local chart $\overline{U}_{1}$ by the change of variables $\phi=1/\lambda$ and $\psi=x/\lambda$.
Throughout this paper, we follow the notations used here for the Poincar\'e-type compactification.
Since we will be considering $\phi\ge 0$ in the following sections, it is sufficient to consider its dynamics in the local coordinates $\overline{U}_{j}$ ($j=1,2$) and $\overline{V}_{2}$.

\begin{figure}[t]
\begin{center}
\includegraphics[scale=0.3]{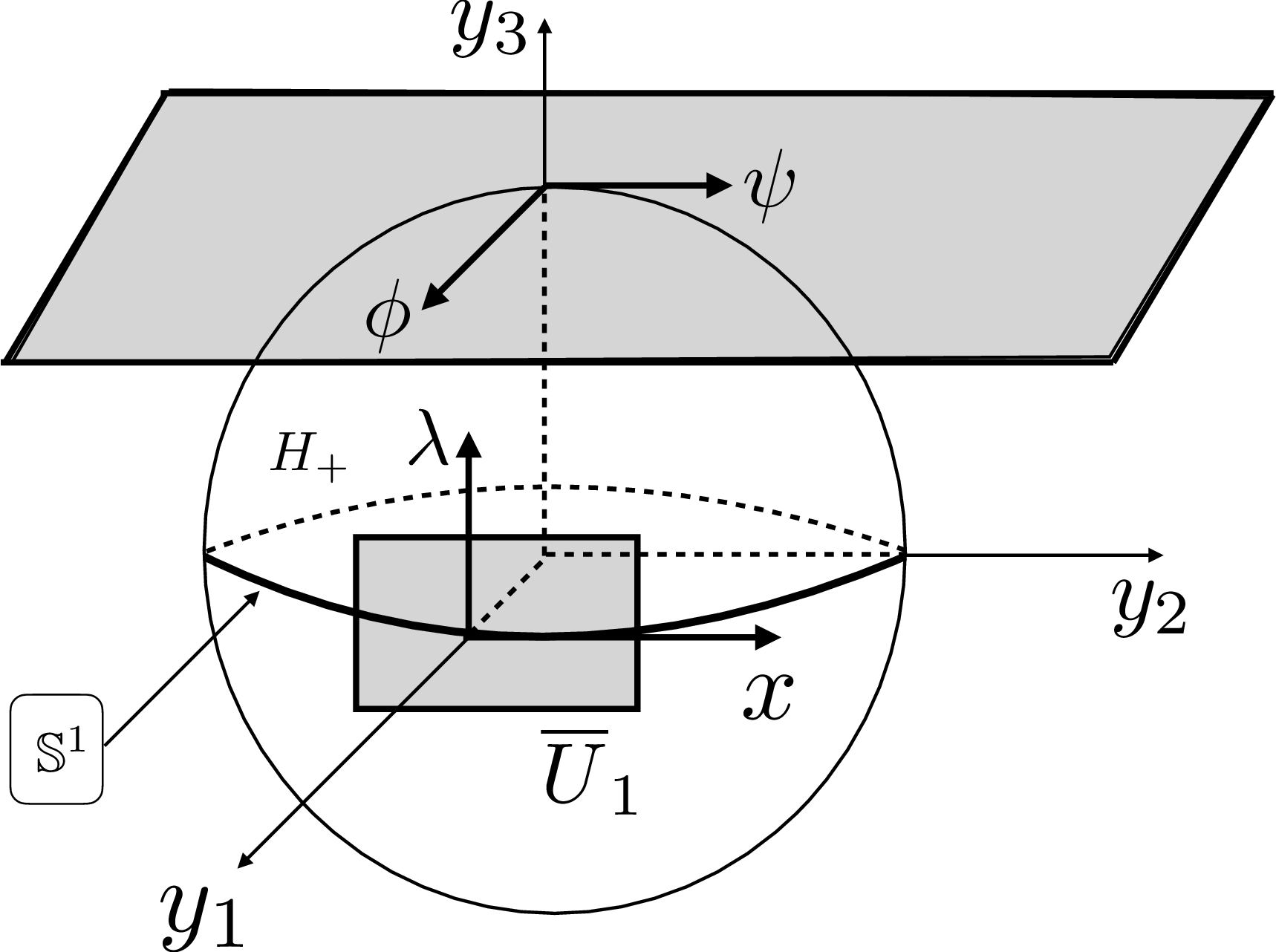}
\caption{Schematic pictures of the locations of the chart $\overline{U}_1$.}
\label{fig:IKYsh-d1}
\end{center}
\end{figure}

\subsection{Dynamics on the local chart $\overline{U}_{1}$}
\label{sub:IKYsh-d2}
To obtain the dynamics on the chart $\overline{U}_{1}$, we introduce the coordinates $(\lambda, x)$ given by
\[
\phi=1/\lambda, \quad
\psi=x/\lambda.
\]
By using the time-scale desingularization $d\tau/ds=\lambda^{-2}$, we obtain
\begin{equation}
\begin{cases}
\lambda_{\tau}=-\lambda x, \\
x_{\tau}= -m^{-1}cx-m^{-1}k+m^{-1}kg\lambda -(2m)^{-1}\varepsilon_{0}SV^{2}\lambda^{3}-x^{2}
\end{cases}
\label{eq:IKYsh-d1}
\end{equation}
with $\lambda_{\tau}=d\lambda/d\tau$ and $x_{\tau}=dx/d\tau$.
The equilibria of \eqref{eq:IKYsh-d1} on $\{\lambda=0\}$ are
\[
(\lambda, x)=(0, x_{-}), \,\, (0, x_{+}),
\]
which correspond to the equilibria at infinity.
Here, $x_{\pm}$ are defined as follows:
\[
x_{\pm}=(2m)^{-1} [c\pm \sqrt{c^{2}-4km}].
\]
Therefore, the existence of an equilibrium depends on the sign of $c^{2}-4km$ and can be classified as follows:
\begin{enumerate}
\item[(i)] If $c^{2}-4km<0$, then there is no equilibrium of \eqref{eq:IKYsh-d1}.
\item[(ii)] If $c^{2}-4km=0$, then \eqref{eq:IKYsh-d1} has exactly one equilibrium $(\lambda, x)=(0, -(2m)^{-1}c)$.
\item[(iii)] If $c^{2}-4km>0$, then \eqref{eq:IKYsh-d1} has exactly two equilibria $(\lambda, x)=(0, x_{-})$ and $(0, x_{+})$ with $x_{-}<x_{+}<0$.
\end{enumerate}
Next, we will present the results concerning the local stability of these equilibria.
The linearized matrices of the vector field \eqref{eq:IKYsh-d1} at each equilibrium for $c^{2}-4km>0$ are
\[
(0, x_{-}): \left(\begin{array}{cc}
-x_{-} & 0 \\ \dfrac{kg}{m} & -\sqrt{\dfrac{c^{2}-4km}{m^{2}}}
\end{array}\right),
\quad
(0, x_{+}): \left(\begin{array}{cc}
-x_{+} & 0 \\ \dfrac{kg}{m} & \sqrt{\dfrac{c^{2}-4km}{m^{2}}}
\end{array}\right).
\]
By calculating the eigenvalues, we can see that the equilibrium $(0,x_{-})$ is a saddle and  $(0, x_{+})$ is unstable.

On the other hand, in the case of $c^{2}-4km=0$, the linearized matrix corresponding to the equilibrium $(\lambda, x)=(0, -(2m)^{-1}c)$ has a zero eigenvalue. 
Hence, we cannot immediately obtain the dynamical system near this equilibrium.
From \eqref{eq:IKYsh-d1}, 
\[
\left. \lambda_{\tau} \vphantom{\big|}\right|_{\lambda>0,\,\, x<0}>0
\]
holds as nullcline.
In other words, even as time approaches infinity, the solution to this system will not be sink into the equilibrium $(\lambda, x)=(0, -(2m)^{-1}c)$.
Therefore, a trajectory starting from the initial conditions $(\phi,\psi)=(g, 0)$ cannot reach this equilibrium at infinity.

\subsection{Dynamics on the local chart $\overline{U}_{2}$}
\label{sub:IKYsh-d3}
The change of coordinates
\[
\phi=x/\lambda, \quad \psi=1/\lambda
\]
and the time-rescaling $d\tau/ds=\lambda^{-2}$ give the projection dynamics of \eqref{eq:IKYsh-f2} on the local chart $\overline{U}_{2}$.
Then we have 
\begin{equation}
\begin{cases}
\lambda_{\tau}= m^{-1}c\lambda x^{2}+m^{-1}k\lambda x^{3}-m^{-1}kg \lambda^{2}x^{2}+(2m)^{-1}\varepsilon_{0}SV^{2}\lambda^{4} 
\\
x_{\tau}=x^{2}+m^{-1}cx^{3}+m^{-1}kx^{4}-m^{-1}kg \lambda x^{3}+(2m)^{-1}\varepsilon_{0}SV^{2}\lambda^{3}x.
\end{cases}
\label{eq:IKYsh-d2}
\end{equation}
The region under consideration is $\{(\lambda, x) \mid \lambda\ge0,\,\,x\ge0\}$.
For $\{\lambda=0\}$, the equilibrium of the system \eqref{eq:IKYsh-d2} that does not coincide with $\overline{U}_{1}$ is 
\[
(\lambda, x)=(0,0).
\]
Since the eigenvalues of the linearized matrix corresponding to this equilibrium are double zero eigenvalues, it is not a hyperbolic equilibrium.
Using the nullcline, the following holds:
\[
\left. \lambda_{\tau} \vphantom{\big|}\right|_{\lambda>0,\,\, x=0}>0, \quad
\left. x_{\tau} \vphantom{\big|}\right|_{\lambda=0,\,\, x>0}>0
\]
It can be seen that a trajectory starting in the first quadrant of the $\lambda x$ plane does not go to the origin or any of the second, third, or fourth quadrants.

\subsection{Dynamics on the local chart $\overline{V}_{2}$}
\label{sub:IKYsh-d4}
The change of coordinates $\phi=-x/\lambda$ and $\psi=-1/\lambda$ and time-rescaling $d\tau/ds=\lambda^{-2}$ to study the dynamics on the local chart $\overline{V}_{2}$ yields
\begin{equation}
\begin{cases}
\lambda_{\tau}= m^{-1}c\lambda x^{2}+m^{-1}k\lambda x^{3}+m^{-1}kg \lambda^{2}x^{2}-(2m)^{-1}\varepsilon_{0}SV^{2}\lambda^{4}, 
\\
x_{\tau}=x^{2}+m^{-1}cx^{3}+m^{-1}kx^{4}+m^{-1}kg \lambda x^{3}-(2m)^{-1}\varepsilon_{0}SV^{2}\lambda^{3}x.
\end{cases}
\label{eq:IKYsh-d3}
\end{equation}
The range corresponding to $\{\phi\ge 0\}$ is $\{(\lambda, x) \mid \lambda\ge 0,\,\,x \le 0\}$.
The equilibrium of this system that does not coincide with $\overline{U}_{1}$ is
\[
(\lambda, x)=(0,0)
\]
and the Jacobian matrices of the vector filed \eqref{eq:IKYsh-d3} at origin is
\[
\left(\begin{array}{cc}
0 & 0 \\ 0 & 0
\end{array}\right).
\]
Hence, this equilibrium is not hyperbolic.
Since the nullcline cannot determine the dynamics near the origin, the following desingularization of vector field by the blow-up technique is an effective method to study the behavior near its equilibrium:
\[
\lambda=r\bar{\lambda},\quad
x=r^{3}\bar{x}.
\]
For instance, see \cite{AFJ, FAL} and the references therein for details about the blow-up technique.
Since we are interested in the dynamics on $\Phi_{\ge 0}=\Phi \,\cup\, \{\phi\ge 0\}$, it is sufficient to consider the dynamics in the blow-up vector field with local coordinates $\bar{\lambda} = 1$ and $\bar{x} = -1$.

\subsubsection{Dynamics on the chart $\{\bar{\lambda}=1\}$}
\label{sub:IKYsh-d4-1}
By the change of coordinates $\lambda=r$, $x=r^{3}\bar{x}$ and the time-rescaling $d\sigma/d\tau=r^{3}$, we have
\begin{equation}
\begin{cases}
r_{\sigma}=m^{-1}cr^{4}\bar{x}^{2}+m^{-1}kr^{7}\bar{x}^{3}+m^{-1}kgr^{5}\bar{x}^{2}-(2m)^{-1}\varepsilon_{0}SV^{2}r, \\
\bar{x}_{\sigma}=\bar{x}^{2}-m^{-1}2cr^{3}\bar{x}^{3}-m^{-1}2kr^{6}\bar{x}^{4}-m^{-1}2kgr^{4}\bar{x}^{3}+m^{-1}\varepsilon_{0}SV^{2}\bar{x},
\end{cases}
\label{eq:IKYsh-d4}
\end{equation}
where $r_{\sigma}=dr/d\sigma$ and $\bar{x}=d\bar{x}/d\sigma$.
The equilibria on $\{r=0, \,\,\bar{x}\le 0\}$ are 
\[
(r, \bar{x})=(0,0), \quad E^{*}: (0, -m^{-1}\varepsilon_{0}SV^{2}).
\]
Calculating the eigenvalues of corresponding Jacobian matrices, we can conclude that the equilibrium $(0,0)$ is a saddle, and $E^{*}: (0, -m^{-1}\varepsilon_{0}SV^{2})$ is a sink.
In particular, the eigenvalues of the Jacobian matrix for $E^{*}$ are the multiple roots $-(2m)^{-1} \varepsilon_{0} SV^{2}$.

\subsubsection{Dynamics on the chart $\{\bar{x}=-1\}$}
\label{sub:IKY-d4-2}
By the change of coordinates $\lambda=r\bar{\lambda}$, $x=-r^{3}$ and the time-rescaling $d\sigma/d\tau=r^{3}$, we obtain the following:
\begin{equation}
\begin{cases}
r_{\sigma}=-3^{-1}r+(3m)^{-1}cr^{4}-(3m)^{-1}kr^{7}+(3m)^{-1}kgr^{5}\bar{\lambda}-(6m)^{-1}\varepsilon_{0}SV^{2}r\bar{\lambda}^{3}, \\
\bar{\lambda}_{\sigma}=3^{-1}\bar{\lambda}+(3m)^{-1}2cr^{3}\bar{\lambda}-(3m)^{-1}2kr^{6}\bar{\lambda}+(3m)^{-1}2kgr^{4}\bar{\lambda}^{2}-(3m)^{-1}\varepsilon_{0}SV^{2}\bar{\lambda}^{4}.
\end{cases}
\label{eq:IKYsh-d5}
\end{equation}
For $\{r=0, \,\,\bar{\lambda}\ge 0\}$, the equilibrium of this system that does not ciincide with $\{\bar{\lambda}=1\}$ is 
\[
(r, \bar{\lambda})=(0,0).
\]
By the further computations, we can see that the equilibrium $(0,0)$ is a saddle.
Combining the dynamics on the charts, we understand the dynamics of the blow-up vector fields and $\overline{V}_{2}$ (see Figure \ref{fig:IKYsh-d2}).

\begin{figure}[t]
\centering
\includegraphics[scale=0.3]{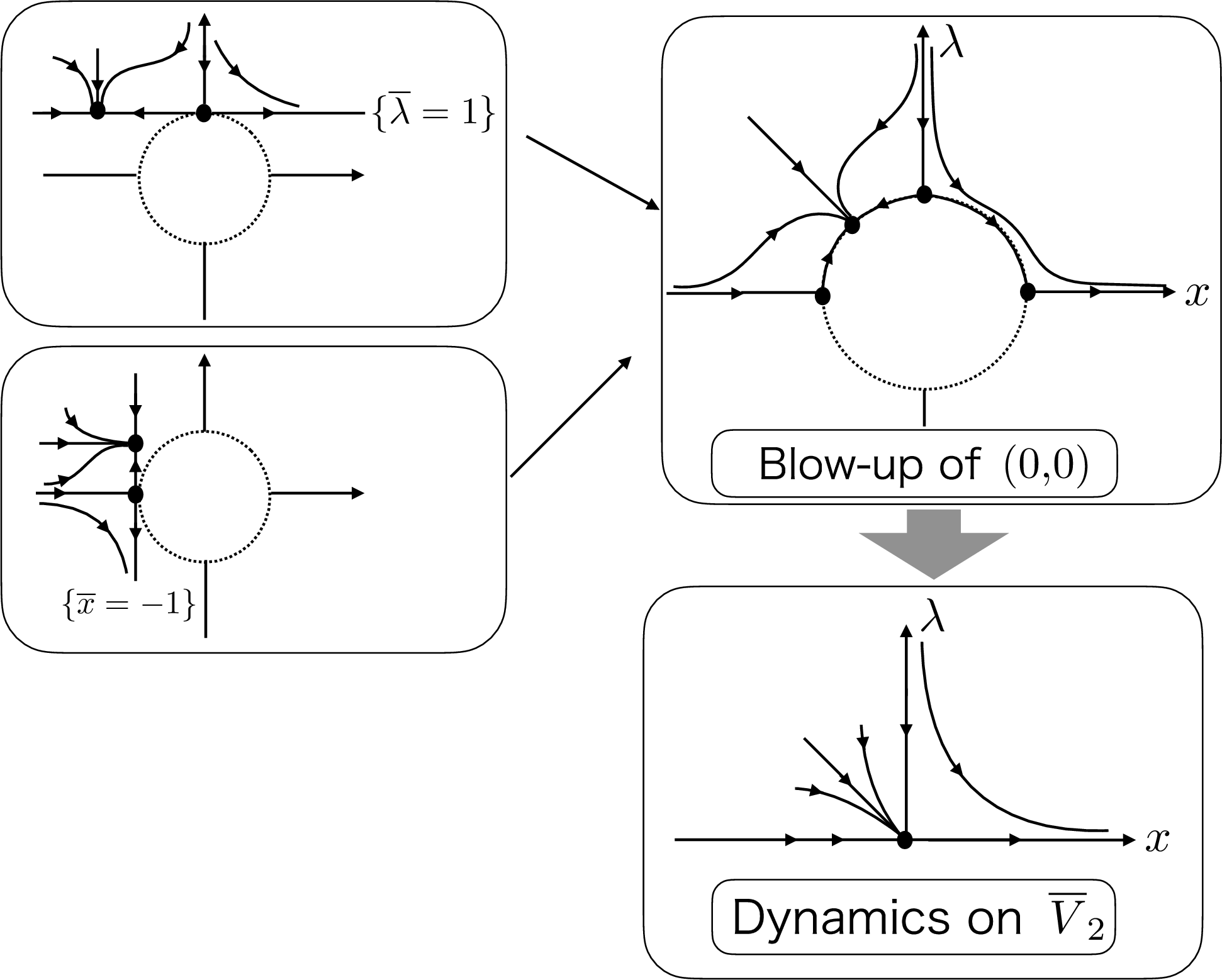}
\caption{Schematic pictures of the dynamics of the blow-up vector fields and $\overline{V}_{2}$.}
\label{fig:IKYsh-d2}
\end{figure}

\subsection{Dynamics and connecting orbits on the Poincar\'e-type disk}
\label{sub:IKYsh-d5}
Combining the dynamics on the charts $\overline{U}_{j}$ ($j=1,2$) and $\overline{V}_{2}$, we obtain the full dynamics $\Phi_{\ge 0}$ that is equivalent to the dynamics of \eqref{eq:IKYsh-f2} or \eqref{eq:IKYsh-int8} for $b=0$. 
We combine the information on the finite equilibria and the equilibria at infinity, we obtain Figure \ref{fig:IKYsh-d3}.
Note that on the circle in Figure \ref{fig:IKYsh-d3}, this corresponds to $\|(\phi, \psi)\| = +\infty$.
Figure \ref{fig:IKYsh-d3} shows the case when $K > 0$.
The only difference for $K = 0$ and $K < 0$ is the point at which the number of finite equilibria changes.

\begin{figure}[t]
\centering
\includegraphics[scale=0.3]{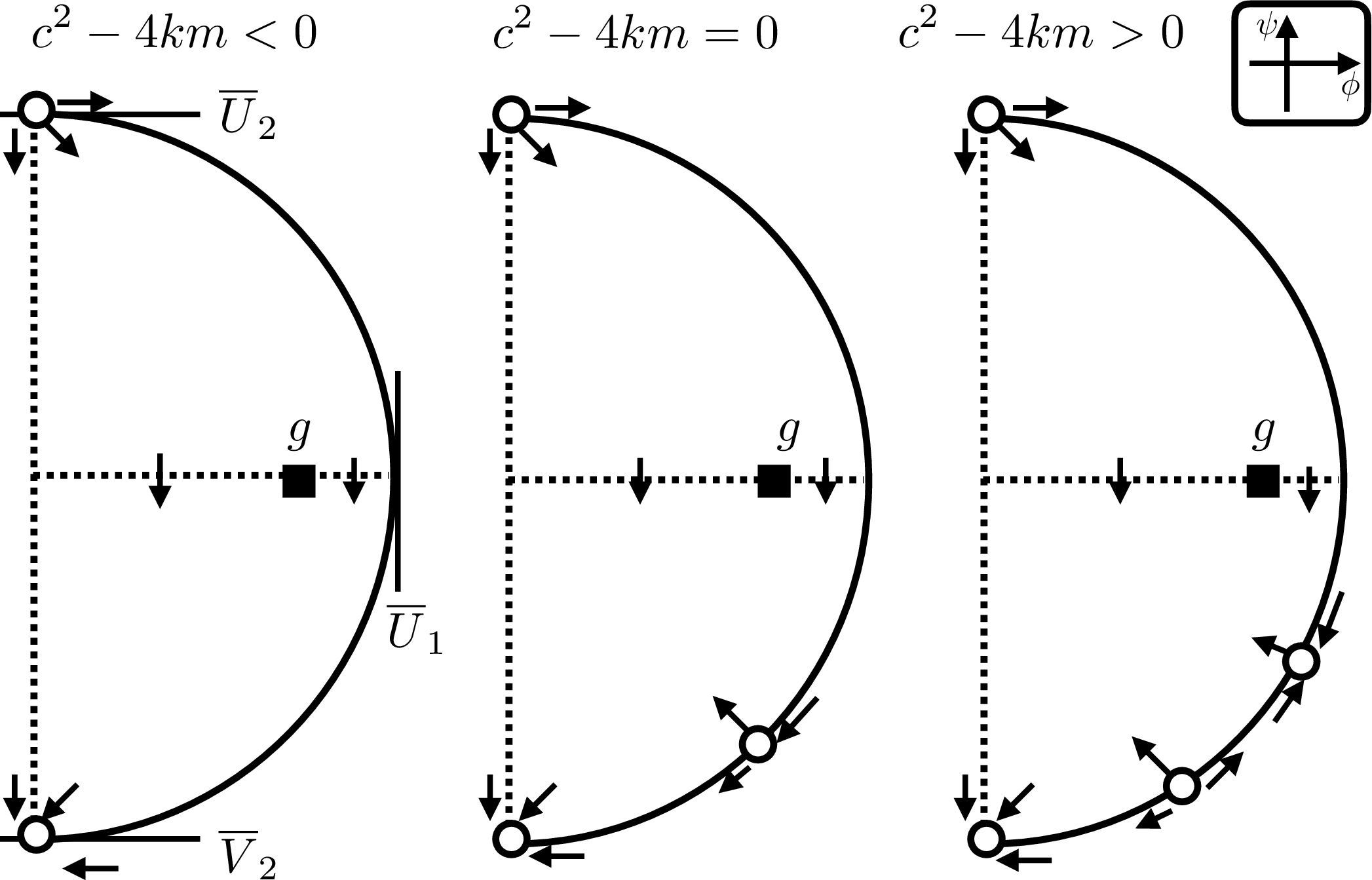}
\caption{Schematic pictures of the dynamics on a Poincar\'e-type disk for $K > 0$. The white circles represent equilibria, and the black squares represent the initial conditions imposed  on \eqref{eq:IKYsh-f2} or \eqref{eq:IKYsh-int8}.}
\label{fig:IKYsh-d3}
\end{figure}

In this subsection, we discuss a orbit starting from the initial conditions $(\phi, \psi)=(g,0)$.
Depending on the value of $K$, the orbit heads toward different points.
When $K < 0$, the objective is to show that the orbit goes to the point $(\phi, \psi) = (\phi_{2}, 0)$, and when $K = 0$, the orbit heads toward $(\phi, \psi) = (2g/3, 0)$ and when $K > 0$, it heads toward $(\phi, \psi) = (0, -\infty)$.

First, when $K > 0$, considering the flow along the $\phi$-axis as a nullcline.
\[
\left. \psi' \vphantom{\big|}\right|_{\psi=0}<0
\]
holds.
Therefore, an orbit starting from the initial conditions $(\phi, \psi)=(g,0)$ must not leave the region $\Phi_{\ge 0} \,\cap\, \{\phi>0,\,\, \psi<0\}$.
According to the Poincar\'e-Bendixson theorem (see, for instance, \cite{Wiggins} and the references therein), it follows that a trajectory starting from the initial conditions $(\phi, \psi)=(g,0)$ must approach the equilibrium at infinity $(\phi, \psi)=(0, -\infty)$.
Using a similar argument, since $g$ is small, there exists a connecting orbit between the point corresponding to the initial conditions and $(\phi, \psi)=(\phi_{2}, 0)$ when $K<0$.
When $K=0$, we see that there exists a connecting orbit between the initial point and $(\phi, \psi)=(2g/3, 0)$.

\section{Dynamics at infinity for $b\neq 0$}
\label{sec:IKYsh-dib}
When $b \neq 0$, \eqref{eq:IKYsh-f2} is an asymptotically quasi-homogeneous vector field of type $(1,2)$ and order $3$ at infinity.
The reason the type and order differ from the case where $b=0$ is that, in \eqref{eq:IKYsh-f2}, the term of highest degree $\phi^{5}$ with $b$ as a coefficient becomes dominant.
Therefore, note that the dynamics at infinity differ between the cases for $b=0$ and $b \neq 0$.

\subsection{Dynamics on the local chart $\overline{U}_{1}$}
\label{sub:IKYsh-dib1}
Reflecting the type $(1,2)$ in the exponent of $\lambda$, the change of coordinates $\phi=1/\lambda$ and $\psi=x/\lambda^{2}$ and time-rescaling $d\tau/ds=\lambda^{-2}$ yield
\begin{equation}
\begin{cases}
\lambda_{\tau}=-\lambda x, \\
x_{\tau}= -m^{-1} [c\lambda x-b+3gb\lambda-(3g^{2}b+k)\lambda^{2}+g(k+bg^{2})\lambda^{3}-2^{-1}\varepsilon_{0}SV^{2}\lambda^{5}]-2x^{2}.
\end{cases}
\label{eq:IKYsh-dib1}
\end{equation}
Although the equilibrium of system \eqref{eq:IKYsh-dib1} on the $\{\lambda=0\}$ does not exist for $b>0$, it has the following for $b<0$:
\[
(\lambda, x)=(0, \pm\sqrt{-(2m)^{-1}b}).
\]
The Jacobian matrix of the vector field at this equilibrium is 
\[
(0, \pm\sqrt{-(2m)^{-1}b}): \left(\begin{array}{cc}
\mp\sqrt{-(2m)^{-1}b} & 0 \\ -m^{-1}c\sqrt{-(2m)^{-1}b}+3m^{-1}gb & \mp\sqrt{-(2m)^{-1}b}
\end{array}\right).
\]
By calculating the eigenvalues, we can see that the equilibrium $(0, -\sqrt{-(2m)^{-1}b})$ is unstable and $(0, \sqrt{-(2m)^{-1}b})$ is an asymptotically stable.

\subsection{Dynamics on the local chart $\overline{V}_{2}$}
\label{sub:IKYsh-dib2}
By the change of coordinates $\phi=-x/\lambda$, $\psi=-1/\lambda^{2}$ and the time-rescaling $d\tau/ds=\lambda^{-3}$, 
\begin{equation}
\begin{cases}
\lambda_{\tau}
=(2m)^{-1} [c\lambda^{2}x^{2}+b\lambda x^{5}+3gb \lambda^{2}x^{4}+(3g^{2}b+k)\lambda^{3}x^{3} \\
\quad\quad\quad +g(k+bg^{2})\lambda^{4}x^{2}-2^{-1}\varepsilon_{0}SV^{2}\lambda^{6}],
\\
x_{\tau}
=x^{2}+(2m)^{-1}[c\lambda x^{3}+bx^{6}+3gb \lambda x^{5}+(3g^{2}b+k)\lambda^{2}x^{4} \\
\quad\quad\quad +g(k+bg^{2})\lambda^{3}x^{3}-2^{-1}\varepsilon_{0}SV^{2}\lambda^{5}x]
\end{cases}
\label{eq:IKYsh-dib2}
\end{equation}
holds.
In addition to the equilibrium $(\lambda, x)=(0,0)$ on $\{\lambda = 0\}$, the following point exists only when $b < 0$:
\[
(\lambda, x)=(0, \pm[ 2m(-b)^{-1}]^{1/4}).
\]
Since our focus is on $\phi \ge 0$, it is sufficient to consider only $( \lambda, x) = (0, -[2m(-b)^{-1}]^{1/4})$.
The linearized matrices at these equilibria are
\begin{align*}
&(0, 0): \left(\begin{array}{cc}
0 & 0 \\ 0 & 0
\end{array}\right), \\
&(0, -[ 2m(-b)^{-1}]^{1/4}): \left(\begin{array}{cc}
-(2m)^{-1}b[2m(-b)^{-1}]^{5/4} & 0 \\ e_1 & 4[2m(-b)^{-1}]^{1/4}
\end{array}\right),
\end{align*}
with 
\[
e_1=-(2m)^{-1}c[2m(-b)^{-1}]^{3/4}-(2m)^{-1}3gb[2m(-b)^{-1}]^{5/4}.
\]
Hence, the equilibrium $(\lambda, x)=(0, -[2m(-b)^{-1}]^{1/4})$ is unstable.
On the other hand, the origin is not hyperbolic. To determine the dynamics near the origin, we introduce the following blow-up coordinates, as in the previous section:
\[
\lambda=r\bar{\lambda}, \quad x=r^{5}\bar{x}
\]

Using the coordinates $\lambda=r$ and $x=r^{5}\bar{x}$ and the time-rescaling $d\sigma/d\tau=r^{5}$, the dynamics on the local chart $\{\bar{\lambda}=1\}$ is as follows:
\begin{equation}
\begin{cases}
r_{\sigma}
= (2m)^{-1} \left[ cr^{7}\bar{x}^{2}+br^{21}\bar{x}^{5}+3gb r^{17}\bar{x}^{4}+(3g^{2}b+k)r^{13}\bar{x}^{3} \right. \\
\quad\quad\quad \left. +(kg+bg^{3})r^{9}\bar{x}^{2}-2^{-1}\varepsilon_{0}SV^{2}r \right],
\\
\bar{x}_{\sigma}
=\bar{x}^{2}-(m)^{-1} \left[2cr^{6}\bar{x}^{3}+2br^{20}\bar{x}^{6}+6gb r^{16}\bar{x}^{5}+2(3g^{2}b+k)r^{12}\bar{x}^{4} \right. \\
\quad\quad\quad  \left.+2(kg+bg^{3})r^{8}\bar{x}^{3}-\varepsilon_{0}SV^{2}\bar{x} \right].
\end{cases}
\label{eq:IKYsh-dib3}
\end{equation}
The equilibria on $\{r=0\}$ are
\[
(r, \bar{x})=(0,0), \quad E_{*}: (r, \bar{x})=(0,-m^{-1}\varepsilon_{0}SV^{2}),
\]
and the linearized matrices at these equilibria are
\begin{align*}
&(0,0):\left(\begin{array}{cc}
-(4m)^{-1}\varepsilon_{0}SV^{2} & 0 \\
0 & m^{-1}\varepsilon_{0}SV^{2}
\end{array}\right),
\\
&(0,-m^{-1}\varepsilon_{0}SV^{2}):\left(\begin{array}{cc}
-(4m)^{-1}\varepsilon_{0}SV^{2} & 0 \\
0 & -m^{-1}\varepsilon_{0}SV^{2}
\end{array}\right).
\end{align*}
Therefore, the equilibrium $(0,0)$ is a saddle, and $E_{*}: (0,-m^{-1}\varepsilon_{0}SV^{2})$ is sink.

For the dynamical system on the local chart $\{\bar{x}=\pm1\}$, we only need to consider the equilibria and their local stability at $\{r=0, \bar{\lambda}\ge 0\}$.
In conclusion, the origin on $\{\bar{x}=\pm1\}$ is a saddle, as in subsection \ref{sub:IKY-d4-2}.
The above discussion yields a qualitative flow schematic diagram similar to that shown in Figure \ref{fig:IKYsh-d2}.

\subsection{Dynamics and connecting orbits on the Poincar\'e-type disk}
\label{sub:IKYsh-dib3}
As in Subsection \ref{sub:IKYsh-d5}, by combining $\overline{U}_{j}$ ($j=1,2$) and $\overline{V}_{2}$ yields the dynamics on $\Phi_{\ge 0}$.
See also Figure \ref{fig:IKYsh-d4}.

\begin{figure}[t]
\centering
\includegraphics[scale=0.3]{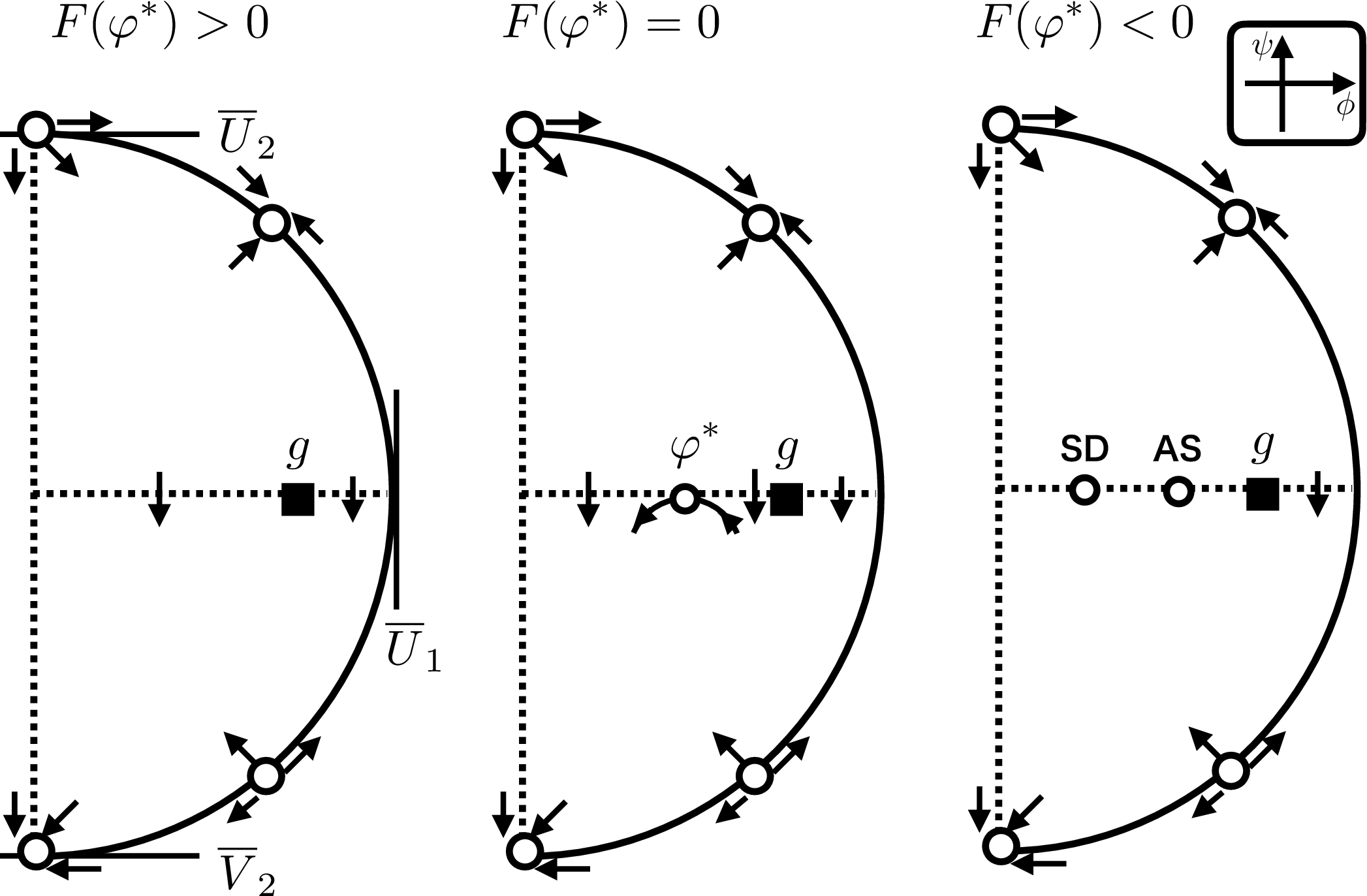}
\caption{
A schematic picture of the dynamics on a Poincar\'e-type disk for $b<0$. The
white circles represent equilibria, and the black squares represent the initial conditions imposed on \eqref{eq:IKYsh-f2} or \eqref{eq:IKYsh-int8}.}
\label{fig:IKYsh-d4}
\end{figure}

In this subsection, we discuss orbits that start from the initial conditions $(\phi, \psi)=(g,0)$.
The conclusion is the same for both $b>0$ and $b<0$, and is qualitatively identical to that in subsection \ref{sub:IKYsh-d5}.
When $F(\varphi^*) < 0$, the system converges to an asymptotically stable equilibrium on the $\phi$-axis.
When $F(\varphi^*) = 0$, the orbit goes to $(\phi, \psi)=(\varphi^{*}, 0)$ the one must approach $(\phi, \psi)=(0, -\infty)$ if $F(\varphi^{*})>0$.
Note that when $b > 0$, then $\varphi^{*} = \phi^{*}$, and when $b < 0$, then $\varphi^{*} = \phi_{*}$.
These conclusions are shown in the same manner as in subsection \ref{sub:IKYsh-d5}.

\section{Proof of theorems}
\label{sec:IKYsh-pro}
\subsection{Proof of Theorem \ref{thm:IKYsh-mr1}}
\label{sub:IKYsh-pro1}
\begin{proof}
We need to show that there exists a $|t_{+}| < +\infty$, and to derive the asymptotic forms in (i) and (ii).
In (iii), the fact that the value $x^{*}$ satisfies $0 < x^{*} < 3^{-1}g$ corresponds to the orbit of \eqref{eq:IKYsh-int8} of $b = 0$ converging to the equilibrium $(\phi, \psi) = (\phi_{2}, 0)$.

First, we will prove (i), that is the case for $K>0$.
The solution near the equilibrium $E^{*}$ in the blow-up vector field of $\{\bar{\lambda}=1\}$ with respect to the origin in the local chart $\overline{V}_{2}$ is approximated as follows:
\[
\begin{cases}
r(\sigma) = C_{1}e^{-(2m)^{-1}\varepsilon_{0}SV^{2}\sigma}(1+o(1)), \\
\bar{x}(\sigma) = C_{2}e^{-(2m)^{-1}\varepsilon_{0}SV^{2}\sigma}(1+o(1)) - \dfrac{\varepsilon_{0}SV^{2}}{m}
\end{cases}
\quad {\rm{as}} \quad \sigma\to +\infty
\]
with constants $C_{1, 2}>0$.
Using the time-rescaling and above relations, 
\begin{align*}
\dfrac{d\sigma}{d\xi}
&= \dfrac{d\sigma}{d\tau}\dfrac{d\tau}{ds}\dfrac{ds}{dt} 
= r^{3}\cdot \lambda^{-2}\cdot \phi^{-2} 
= r^{-3}\bar{x}^{-2} \\
&= \left\{ C_{1}e^{-(2m)^{-1}\varepsilon_{0}SV^{2}\sigma}(1+o(1)) \right\}^{-3} \cdot \left\{ C_{2}e^{-(2m)^{-1}\varepsilon_{0}SV^{2}\sigma}(1+o(1)) - \dfrac{\varepsilon_{0}SV^{2}}{m} \right\}^{-2} \\
&\sim C_{3}e^{\frac{3}{2}\frac{\varepsilon_{0}SV^{2}}{m}\sigma} 
\quad {\rm{as}} \quad \sigma\to +\infty
\end{align*}
holds.
Let 
\[
t_{+}=\lim_{\sigma \to +\infty}t(\sigma),
\]
then we have 
\[
t_{+} =\int_{0}^{+\infty} C_{4}e^{-\frac{3}{2}\frac{\varepsilon_{0}SV^{2}}{m}\sigma}\, d\sigma <+\infty.
\]
Therefore, we obtain the following asymptotic form:
\[
t_{+} -t \sim C_{5}e^{-\frac{3}{2}\frac{\varepsilon_{0}SV^{2}}{m}\sigma}
\quad {\rm{as}} \quad \sigma\to +\infty
\]
with a constant $C_{5}>0$.
Then, the asymptotic form of $x(t)$ is 
\begin{align*}
x(t)
&= g-\phi(t) \\
&= g+\lambda^{-1}x = g+r^{2}\bar{x} \\
&= g+ \left\{ C_{1}e^{-(2m)^{-1}\varepsilon_{0}SV^{2}\sigma}(1+o(1)) \right\}^{2} \cdot \left\{ C_{2}e^{-(2m)^{-1}\varepsilon_{0}SV^{2}\sigma}(1+o(1)) - \dfrac{\varepsilon_{0}SV^{2}}{m} \right\} \\
&\sim g - C_{6}e^{-\frac{\varepsilon_{0}SV^{2}}{m}\sigma} \\
&\sim g - C_{6}(t_{+}-t)^{\frac{2}{3}} 
\quad {\rm{as}} \quad t\to t_{+}-0
\end{align*}
with a constant $C_{6}>0$.
Hence, we see that \eqref{eq:IKYsh-mr1} holds.

Next, we will consider the case for $K=0$.
From \eqref{eq:IKYsh-b04}, the approximation of the solution on the center manifold at the equilibrium $E_{0}$ yields 
\[
\phi(s) =\dfrac{2}{3}g - \dfrac{c}{-kg s+C_{7}} +o(s^{-1})
\quad {\rm{as}} \quad s\to +\infty
\]
with a constant $C_{7}$.
Using the time-rescaling transformation, we have
\[
\dfrac{ds}{dt}=\phi^{-2} \sim \left(\dfrac{2}{3}g - \dfrac{c}{-kg s+C_{7}}\right)^{-2}
\quad {\rm{as}} \quad s\to +\infty
\]
and 
\begin{align*}
t+C_{8} 
&\sim \dfrac{c^{2}}{-k^{2}g^{2}s+C_{7}kg}+\dfrac{4c}{3k}\log(-kgs+C_{7})-\dfrac{4g}{9k}(-kgs+C_{7}) \\
&\sim \dfrac{4}{9}g^{2}s 
\quad {\rm{as}} \quad s\to +\infty
\end{align*}
with constants $C_{j}$.
From the above, the asymptotic behavior in (ii) has been derived.
Thus, this completes the proof of Theorem \ref{thm:IKYsh-mr1}.
\end{proof}

\subsection{Proof of Theorem \ref{thm:IKYsh-mr2}}
\label{sub:IKYsh-pro2}
\begin{proof}
The equations \eqref{eq:IKYsh-mr3} and \eqref{eq:IKYsh-mr4} characterize the behavior of solutions and are derived from the argument about the existence of finite equilibria in Section \ref{sec:IKYsh-f}.
The real root $\varphi^{*} \in (0, g)$ is given by $b>0$ in Proposition \ref{prop:IKYsh-f1} as $\varphi^{*}=\phi^{*}$, and for the case $b<0$ in Proposition \ref{prop:IKYsh-bf1} as $\varphi^{*}=\phi_{*}$.
\begin{enumerate}
\item[(i)]
When $F(\varphi^{*}) > 0$, we need to show that there exists $|t_{+}|<+\infty$ and that the asymptotic form is given by $\eqref{eq:IKYsh-mr1}$.
This is similar to Subsection $\eqref{sub:IKYsh-pro1}$.
Using the relationship between approximation of solutions near the equilibrium $E_{*}$ in the blow-up vector field on $\{\bar{\lambda}=1\}$ with respect to the origin in the local chart $\overline{V}_{2}$, we obtain 
\begin{align*}
\dfrac{d\sigma}{dt}
= r^{5}\cdot \lambda^{-3}\cdot \phi^{-2} 
= r^{-6}\bar{x}^{-2} 
\sim C_{3}e^{\frac{3}{2}\frac{\varepsilon_{0}SV^{2}}{m}\sigma} 
\quad {\rm{as}} \quad \sigma\to +\infty.
\end{align*}
This asymptotic form is the same as the one obtained in Subsection \ref{sub:IKYsh-pro1}.
By repeating the same argument as in Subsection \ref{sub:IKYsh-pro1}, we obtain 
\begin{align*}
x(t)
&= g-\phi(t) = g+r^{4}\bar{x} \\
%&= g+ \left\{ C_{10}e^{-(4m)^{-1}\varepsilon_{0}SV^{2}\sigma}(1+o(1)) \right\}^{4} \cdot \left\{ C_{11}e^{-m^{-1}\varepsilon_{0}SV^{2}\sigma}(1+o(1)) - m^{-1}\varepsilon_{0}SV^{2} \right\} \\
&\sim g - C_{12}e^{-\frac{\varepsilon_{0}SV^{2}}{m}\sigma} \\
&\sim g - C_{12}(t_{+}-t)^{\frac{2}{3}} 
\quad {\rm{as}} \quad t\to t_{+}-0
\end{align*}
with constants $C_{j}>0$.
Hence, Theorem \ref{thm:IKYsh-mr2} (i) has been proven.
\item[(ii)]
When $F(\varphi^{*})=0$, the evaluation of $x^{*}=g-\varphi^{*}$ are derived from Propositions \ref{prop:IKYsh-f1} and \ref{prop:IKYsh-bf1}.
The derivation of the asymptotic behavior follows a discussion similar to that in Subsection \ref{sub:IKYsh-pro1}.
From \eqref{eq:IKYsh-sp5}, the approximation of the solution on the center manifold near the equilibrium $(\phi, \psi)=(\varphi^{*}, 0)$ is
\[
\phi(s) =\varphi^{*} - \dfrac{2c}{-H(\varphi^{*})s+C_{13}} +o(s^{-1})
\quad {\rm{as}} \quad s\to +\infty
\]
with a constant $C_{13}$.
Using the time-rescaling, we have
\[
\dfrac{ds}{dt}=\phi^{-2} \sim \left(\varphi^{*} - \dfrac{2c}{-H(\varphi^{*}) s+C_{13}}\right)^{-2}
\quad {\rm{as}} \quad s\to +\infty
\]
and 
\begin{align*}
t+C_{14} 
&\sim \dfrac{4c^{2}}{-H(\varphi^{*})^{2}s+C_{13}H(\varphi^{*})}+\dfrac{4c}{H(\varphi^{*})}\log(-H(\varphi^{*})s+C_{13}) \\
&\quad\quad\quad -\dfrac{(\varphi^{*})^{2}}{H(\varphi^{*})}(-H(\varphi^{*})s+C_{13}) \\
&\sim (\varphi^{*})^{2}s 
\quad {\rm{as}} \quad s\to +\infty
\end{align*}
with the constants $C_{j}$.
Here, $s\to \infty$ corresponds to $t\to +\infty$.
Then, the asymptotic form of $\phi(t)$ is as follows:
\[
\phi(t)=\varphi^{*}+O(t^{-1}) \quad {\rm{as}} \quad t\to +\infty.
\]
\item[(iii)]
When $F(\varphi*) < 0$, it follows from Proposition \ref{prop:IKYsh-f1} (resp. Proposition \ref{prop:IKYsh-bf1}) that $\varphi^{*} < \phi_2$ (resp. $\varphi^{*} < \tilde{\phi}_2$).
This yields the result in (iii).
\end{enumerate}
Thus, this completes the proof of Theorem \ref{thm:IKYsh-mr2}.
\end{proof}

\section{Discussion}
\label{sec:IKYsh-c}
This paper focuses on a parallel-plate electrostatic actuator, the simplest representation of a MEMS structure.
We report on the differences in three characteristic behaviors (pull-in, the 1/3 rule, and touchdown) when modeled using linear, hard, and soft springs.

When $b=0$, the model are governed entirely by the linear restoration force and the electrostatic attraction.
According to Theorem \ref{thm:IKYsh-mr1}, the three fundamental behavior essential for MEMS development emerge naturally from the phase space geometry. By directly analyzing the governing differential equations rather than relying on standard nondimensionalization, we have provided an explicit mathematical characterization of these phenomena while preserving the clear physical identity of each control parameter. 
Furthermore, we clarified the asymptotic behavior corresponding to the speed at touchdown.
We revealed a mathematical structure in which pull-in corresponds to the bifurcation point of a finite equilibrium and originates from degeneracy.
In addition, the touchdown corresponds to a quench, which is a finite-time singularity phenomenon of the solution.
This can be interpreted as an orbit approaching an equilibrium at infinity.

In other words, Theorem \ref{thm:IKYsh-mr1} uses the pull-in voltage $V=V^{*}$ given by \eqref{eq:IKYsh-mr2} to show that as the voltage increases,
\begin{itemize}
\item For lower operating voltages satisfying $0 < V < V^*$, the condition $K < 0$ holds, corresponding to situation (iii), where a stable state exists.
\item At the (critical) pull-in voltage $V=V^{*}$, $K=0$ holds in situation (ii),
\item When the voltage exceeds the pull-in voltage $V>V^{*}$, a touchdown case (i) occurs and $K>0$ holds.
\end{itemize}
This rigorous classification corroborates well-known empirical facts in the conventional MEMS field from a solid, unified mathematical perspective.

Importantly, even when the cubic spring nonlinearity $b \neq 0$ is introduced, this global mathematical structure remains qualitatively and topologically invariant. 
That is, apart from the three known classifications established in Theorem \ref{thm:IKYsh-mr2}, the structure remains the same even when $b$ is introduced.
In terms of mathematical structure, pull-in behavior corresponds to the degeneracy and bifurcation point of finite equilibrium in the derived two-dimensional system.
In addition, the touchdown behavior corresponds to the finite-time singularity of solutions and can be obtained by extracting the dynamical system at infinity in phase space.

These mathematical results establish a structural framework that serves as a guiding principle for investigating the material properties of MEMS and evaluating their performance.
It represents the fundamental mathematical characteristics underlying the understanding of the structures of more complex modern MEMS devices.
In general, analyzing the degeneracy and finite-time singularity of solutions to nonlinear differential equations is a challenging mathematical problem, and these factors contribute to the abundance of solutions.
While our primary motivation stems from the need to predict and control instabilities in modern MEMS engineering, the intricate geometric structure of the phase space near infinity is inherently compelling from a purely mathematical interest.

If $b \neq 0$, then the pull-in voltage $V = V^*$ cannot be expressed explicitly as in \eqref{eq:IKYsh-mr2}.
That is, it is expressed implicitly as in \eqref{eq:IKYsh-mr3} and \eqref{eq:IKYsh-mr4}.
This implies the existence of the pull-in voltage $V = V^*$, and we list the consequences that follow from Theorem \ref{thm:IKYsh-mr2}.
\begin{itemize}
\item When a voltage $V$ is applied, the behavior of the two plates is shown as in (iii).
\item The system is in a pull-in state when the voltage is $V = V^*$ and $F(\varphi^*) = 0$.
\item Furthermore, a touchdown occur when a voltage slightly higher than the pull-in voltage is applied.
\end{itemize}
As shown in the results, we can see that the asymptotic forms and the touchdown time $t_{+}$ are independent of $b$.
This universal mathematical conclusion demonstrates that $b$ has no effect on touchdown.
From a MEMS safety perspective, this suggests that (new and more) additional experiments involving changes in spring characteristics corresponding to multiple parameter variations are unnecessary for preventing touchdown behavior in parallel-plate electrostatic actuators.

We also discuss the pull-in phenomenon.
Theorem \ref{thm:IKYsh-mr2} (ii) rigorously demonstrates that when a hard spring ($b>0$) is incorporated, the maximum stable displacement of the plate in the pull-in state can safely exceed the classical linear limit of $1/3$ of the initial gap $g$. Conversely, incorporating a soft spring ($b<0$) restricts the stable travel distance to strictly less than $g/3$.

This study provides mathematically rigorous results regarding three characteristic changes in behavior of a parallel-plate electrostatic actuator model resulting from the incorporation of a nonlinear spring. 
This problem originates from the MEMS field.
We demonstrate how an engineering problem can be successfully translated into mathematical terms and resolved using qualitative geometric methods.
As feedback to the MEMS field, Theorem \ref{thm:IKYsh-mr2} (ii) provides a design guideline stating that when manufacturing MEMS devices that displace by more than 1/3 of the initial gap, it is preferable to incorporate a hard spring with $b>0$.
The results presented in this paper concerning the relationship between the spring parameter $b$ and the displacement and behavior during pull-in and touchdown will not only lead to improved MEMS functionality but also contribute to the development of high-precision and safe MEMS design tools.

\section*{Acknowledgments}
This work was partly supported by the JSPS KAKENHI (Grant No. 22H01929, 25K17306), the JST A-STEP (Grant No. JPMJTR22R5), JST Program for co-creating startup ecosystem (Grant No. JPMJSF2315) and the grants from the Mitutoyo Association for Science and Technology, and Ritsumeikan Advanced Research Academy (RARA), Ritsumeikan University, Japan.

%%%%%%%%%%%%%%%%%%%%%%%%%%%%%%%%%%%%%%%%%%%%%%%%%%

\end{document}